\def\ps@pprintTitle{
    \let\@oddhead\@empty
    \let\@evenhead\@empty
    \def\@oddfoot{}
    \let\@evenfoot\@oddfoot}
\journal{}
\newcommand{\R}{\mathbb{R}}
\newcommand{\N}{\mathbb{N}}
\newcommand{\e}{\mathrm{e}}
\newcommand{\vertiii}[1]{{\left\vert\kern-0.25ex\left\vert\kern-0.25ex\left\vert #1 \right\vert\kern-0.25ex\right\vert\kern-0.25ex\right\vert}}
\newcommand{\fucik}{Fu\v{c}\'{\i}k}
\newtheorem{theorem}{Theorem}
\newtheorem{lemma}[theorem]{Lemma}
\newtheorem{corollary}[theorem]{Corollary}
\theoremstyle{definition} 
\newtheorem{remark}[theorem]{Remark}
\newtheorem*{acknowledgement}{Acknowledgement}
\begin{document}
	
\begin{frontmatter}
	
	\title{Notes on number of one-troughed travelling waves in asymmetrically supported bending beam}
	
	\author{Hana Formánková Lev\'a}
	\ead{levah@kma.zcu.cz}
	\author{Gabriela Holubov\'{a}}
	\ead{gabriela@kma.zcu.cz}
	
	\address{Department of Mathematics and NTIS, Faculty of Applied Sciences, University of~West Bohemia in Pilsen, Univerzitn\'{\i}~8, 301~00~Plze\v{n}, Czech Republic}
	
	\begin{abstract}
		We study the boundary value problem for nonlinear fourth-order partial differential equation with jumping nonlinearity which can serve, e.g., as a model of an asymmetrically supported bending beam. We focus on a special type of solutions, the so-called one-troughed travelling waves. The main goal of this paper is to show the existence of at least two different one-troughed travelling waves for particular wave speeds and input parameters of the studied problem. We present the upper bounds for the maximal number of one-troughed solutions together with a visualisation of obtained results and corresponding solutions. Finally, we list several open questions regarding this topic.
	\end{abstract}
		
	\begin{keyword}
    	beam equation \sep
   	    jumping nonlinearity \sep    	
		travelling wave \sep
		one-troughed solution
		
		\medskip
		
		\MSC 
		 35C07 \sep 35A02 \sep 34B15 \sep 34B40
		  

	\end{keyword}

\end{frontmatter}


\section{Introduction}
\label{sec:introduction}

Let us consider the following fourth order partial differential equation
\begin{equation}
\nonumber
\label{eq:general_eq}
u_{\tau\tau}+u_{xxxx}+ f_1(u) = f_2(x,\tau), \quad x \in \R, \, \tau>0,
\end{equation}
with special choice $f_1(u) = a u^+ - b u^-$ and $f_2(x,\tau) = 1$, where $u^+$ and $u^-$ stand for the positive and negative parts of $u$, i.e., $u^{\pm}(x,\tau) = \max\{\pm u(x,\tau),0\}$, $u = u^+-u^-$, and $a > b \geq 0$.  That is, we consider the above equation with a simple jumping nonlinearity and a constant (normalized) right-hand side. It can serve, e.g., as a model of an asymmetrically supported vibrating beam loaded only by its own weight. The unknown function $u=u(x,\tau)$ then describes the downward vertical deflection of the beam at a point $x$ and time $\tau$.

To be more specific, we will study the following boundary value problem
\begin{equation}
\label{eq:original_problem}
\left\{ \begin{array}{l}
u_{\tau\tau}+u_{xxxx}+ a u^+ - b u^- = 1, \quad x \in \R, \, \tau>0,\\
u(x,\tau) \rightarrow \dfrac{1}{a}, ~u_{x}(x,\tau) \rightarrow 0~~\textup{for}~|x| \rightarrow +\infty,
\end{array} \right.
\end{equation}
and we focus on its travelling wave solutions. One of the main motivations to study problems of type \eqref{eq:original_problem} was the collapse of Tacoma Narrows Bridge in 1940 only several months after its commissioning. It is usually mentioned in this context, however the travelling waves were also observed on other suspension bridges, e.g., Golden Gate Bridge in San Francisco as reported in \cite{amann_karman_woodruff}. Occurance of these phenomena in long structures as beams can be dangerous, therefore it is suitable to predict and prevent them. 

Since Tacoma Narrows Bridge collapse many papers have been published dealing with this topic. As far as we know, the main series of them was started by McKenna and Walter \cite{mckenna_walter} in 1990. The authors dealt with a normalized model of a suspension bridge where the searched solution $u$ represents the deflection of the roadbed of the bridge and the term $au^+-bu^-$ corresponds to the restoring force of the bridge cables. They behave like springs with different stiffness with respect to opposite directions. The coefficient $a$ gives the downward stiffness, i.e. the stiffness in the positive direction, $b$ describes stiffness for moving upwards. The authors studied the problem \eqref{eq:original_problem} with a special choice  $a=1$, $b=0$, i.e., the cables do not cause the upward movement of the roadbed. They proved the existence of the travelling wave solution by finding its analytical form. At the same time they introduced the first limitation on the wave speed to the interval $\left(0, \sqrt{2}\right)$ for which there exists the required solution. Notice that this corresponds to $\left(0, \sqrt[4]{4a}\right)$ for a general $a>0$.

Their work was followed by paper by Chen and McKenna \cite{chen_mckenna} who proved the existence of travelling wave solution by variational methods, in particular Mountain Pass Theorem and concentration compactness principle. They also presented results for the problem \eqref{eq:original_problem} with an additional nonlinear term $g(u)$ satisfying certain suitable conditions, i.e., with $f_1(u) = au^++g(u)$. The Mountain Pass algorithm was also used to find some of the solutions numerically. Moreover, Chen and McKenna studied stability of travelling waves together with other properties and observed surprising behaviour similar to solitons (after collision, two waves appear almost intact).

The topic was further extended by Chen \cite{chen} who considered other types of the nonlinearity $f_1$ instead of the piecewise linear one. In particular, the variational proof for problems with the so called fast increasing nonlinearity was presented. However, the used assumptions on the nonlinear term were quite restrictive.

Later, Lazer and McKenna in \cite{lazer_mckenna} contributed to the topic by showing the unboundedness of the wave amplitude for the wave speed converging to zero. It confirmed the observations from the numerical experiments in the original paper \cite{mckenna_walter}. Moreover, Karageorgis and Stalker \cite{karageorgis_stalker} presented a lower bound for the amplitude of the travelling waves. Results concerning stability and interaction properties can be found, e.g., in \cite{champneys_mckenna_zegeling} or \cite{levandosky}. As further contributions to the topic we can mention the following papers \cite{diaferio_sepe}, \cite{camacho_bruzon_ramirez_gandarias}, \cite{li_sun_wu}, \cite{lazer_mckenna_2} and references therein. 

For engineers and for the application in practice it is more convenient to use the ``smoothed'' nonlinearity  $e^{au-1}$ instead of $au^+$. Both functions have the same value at $u=1/a$ and same limits for $u \to \pm \infty$, however, the advantage of the exponential is the existence of its derivative everywhere. Such nonlinear terms are more suitable for numerical experiments despite their superlinear growth. Namely, we mention the work by Chen and McKenna \cite{chen_mckenna_2}. They studied the existence of the solutions of the problem with exponential nonlinearity and their stability and interaction properties. The authors further summarize several open questions concerning the travelling waves in suspension bridges, some of which remain open until today.

Regarding the number of solutions, Smets and van den Berg \cite{smets_vandenberg} proved the existence of at least one solution of the problem with the exponential nonlinearity with $a =1$ for almost all the wave speeds from the interval $\left(0, \sqrt{2}\right)$. Next, in \cite{breuer} computer assisted proof gave 36 different solutions with fixed wave speed. 

\medskip

In \cite{holubova_leva}, the existence of homoclinic travelling wave solutions of \eqref{eq:original_problem} with $b > 0$ and an additional nonlinear term $g(u)$ was studied. The authors consider more general assumptions on $g$ then previously in \cite{chen_mckenna}. Moreover, allowing the presence of the term $bu^-$ results in the limitation of the possible wave speeds. In contrast to Chen and McKenna's necessary condition for the existence of travelling wave solution $|c| \in (0,\sqrt[4]{4a})$ from \cite{chen_mckenna}, i.e., for the problem \eqref{eq:original_problem} with $b=0$, the necessary condition in \cite{holubova_leva} reads as $|c| \in (\sqrt[4]{4b},\sqrt[4]{4a})$. Both are in accordance with the results in \cite{karageorgis_stalker}.

Using variational methods in \cite{holubova_leva}, the existence of travelling waves with wave speed from the interval $(\sqrt[4]{100b/9},\sqrt[4]{4a})$ was proved in \cite{holubova_leva}. However, the authors admit that the interval could be enlarged, which is the subject of the paper \cite{holubova_leva_necesal}. There was presented the maximal wave speed range $(\sqrt[4]{4b/\beta^{\ast}},\sqrt[4]{4a})$ with $\beta^{\ast} = \beta^{\ast}(a/b) \in (b/a, 1)$ for which the existence of the travelling wave solution can be proved using Mountain Pass Theorem. Since the lower bound is given by the \fucik ~spectra of the related Dirichlet problems which are difficult to compute, the authors introduced its several approximations and its relation to the \fucik ~spectra of a periodic problems. In Figure \ref{fig:beta_star} there are two illustrations of the necessary and sufficient conditions for the existence of travelling wave solutions from \cite{holubova_leva} and \cite{holubova_leva_necesal} regarding the wave speed  $\sqrt[4]{a}|c|$ (see the next section for the explanation of this setting).

\begin{figure}[thb]
\centerline{
  \setlength{\unitlength}{1mm}
  \begin{picture}(80, 47)(0,0)
    \put(0,0){\includegraphics[height=4.7cm]{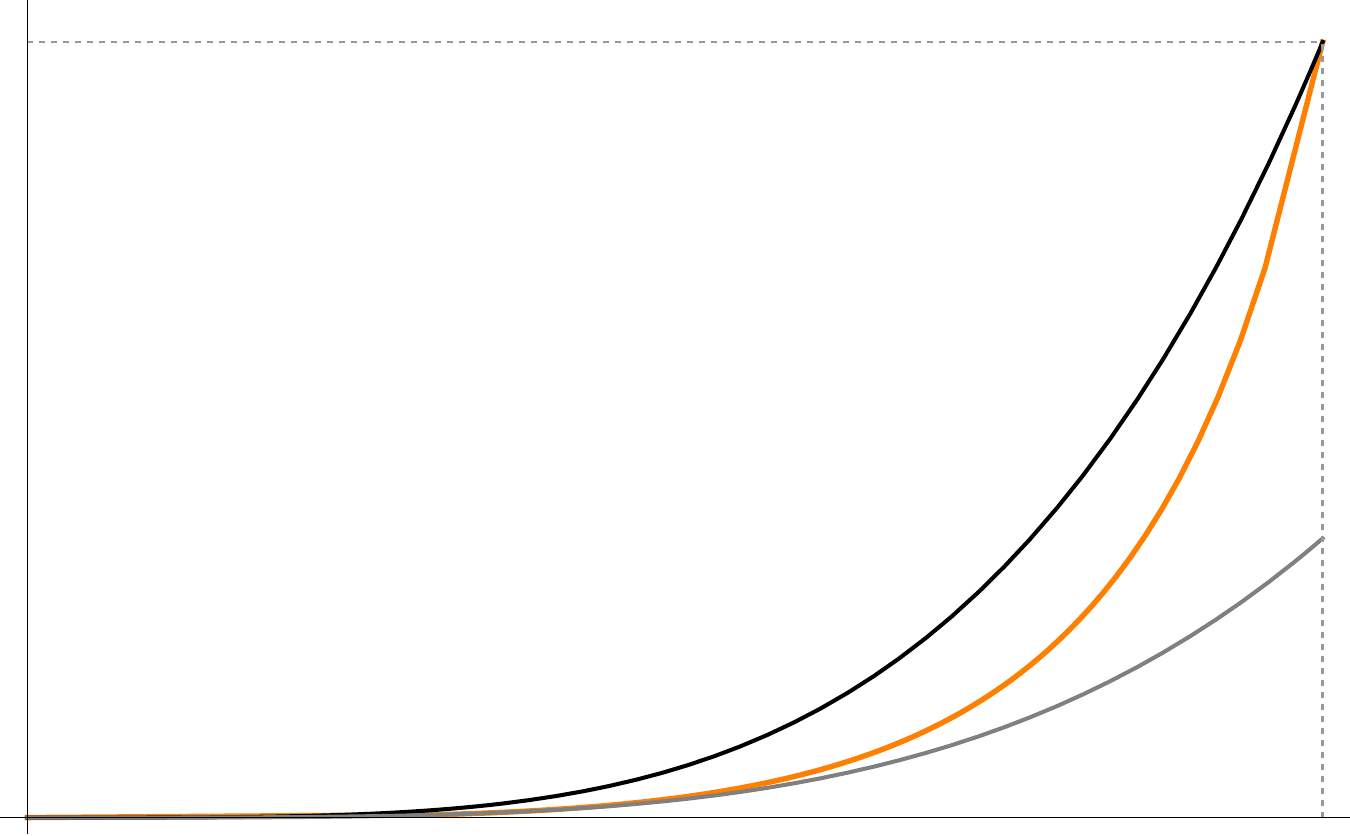}}       
	\put(78,0){\makebox(0,0)[lb]{\footnotesize$c$}}       
    \put(-2,47){\makebox(0,0)[lb]{\footnotesize$\frac{b}{a}$}}  
    \put(65,35){\makebox(0,0)[lb]{\footnotesize$\frac{c^4}{4}$}}       
    \put(70,25){\makebox(0,0)[lb]{\footnotesize$\beta^{\ast}\frac{c^4}{4}$}} 
    \put(75,15){\makebox(0,0)[lb]{\footnotesize$\frac{9c^4}{100}$}}  
    \put(-1,44){\makebox(0,0)[lb]{\tiny$1$}}      
    \put(73,-2){\makebox(0,0)[lb]{\tiny$\sqrt{2}$}}
    \put(0,-1){\makebox(0,0)[lb]{\tiny$0$}}   
  \end{picture}
  \hspace{0.25cm}
  \begin{picture}(80, 47)(0,0)
      \put(0,0){\includegraphics[height=4.7cm]{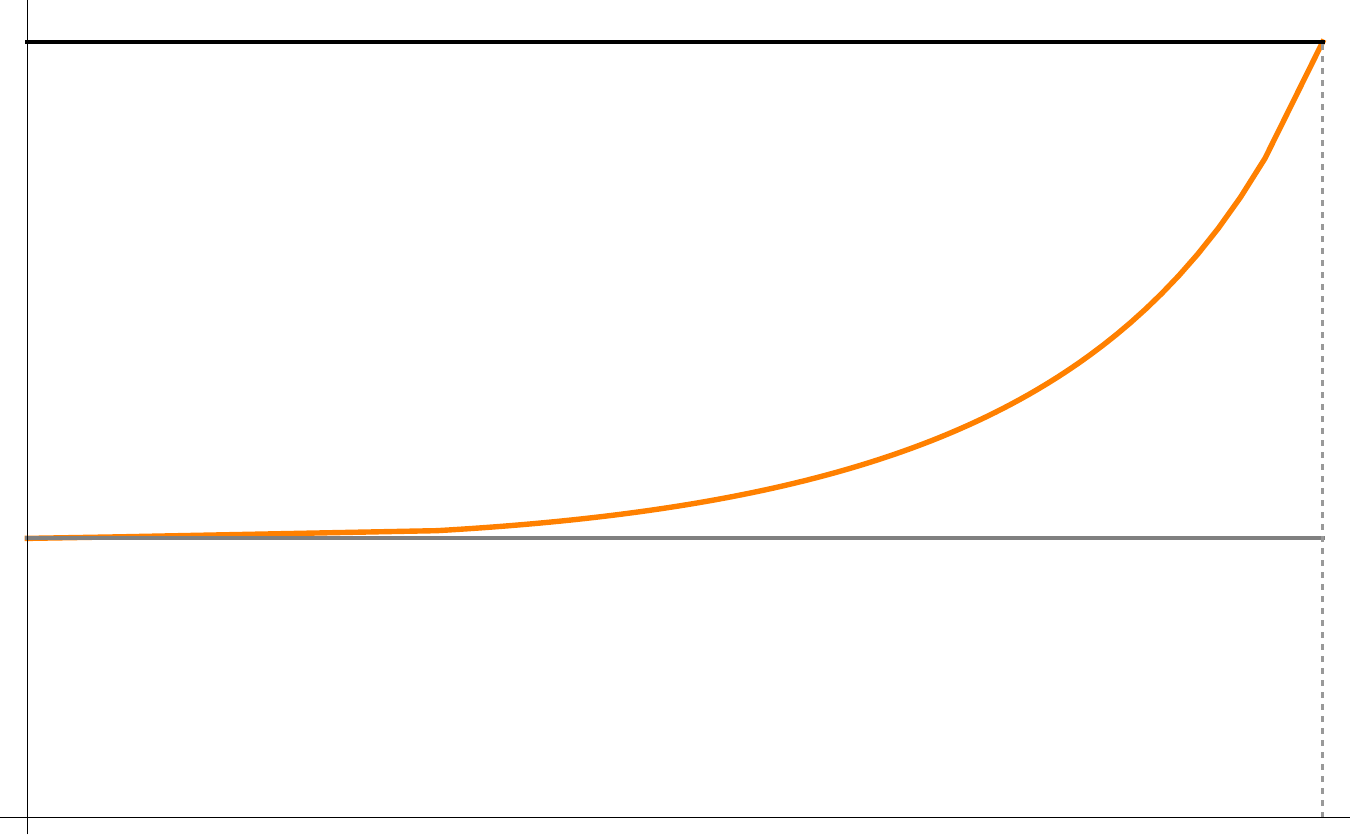}} 
      \put(78,0){\makebox(0,0)[lb]{\footnotesize$c$}}       
      \put(-2,47){\makebox(0,0)[lb]{\footnotesize$p$}}
      \put(69,31){\makebox(0,0)[lb]{\footnotesize$\beta^{\ast}$}}
      \put(-1,44){\makebox(0,0)[lb]{\tiny$1$}}      
      \put(-2,15){\makebox(0,0)[lb]{\tiny$\frac{9}{25}$}}    
      \put(73,-2){\makebox(0,0)[lb]{\tiny$\sqrt{2}$}} 
      \put(0,-1){\makebox(0,0)[lb]{\tiny$0$}}  
  \end{picture}    
  }
\caption{On the left, the illustration of  the border line of the area corresponding to the necessary condition $b/a < c^4/4 < 1$ (in black) and sufficient conditions $b/a < 9c^4/100$ from \cite{holubova_leva} (in gray) and $b/a < \beta^{\ast}(a/b)c^4/4$ from \cite{holubova_leva_necesal} (in orange) for the existence of travelling wave solution. On the right, the left picture is rescaled onto a rectangle by $b/a = p c^4/4$ with $p \in (0,1)$.}
\label{fig:beta_star}
\end{figure}

In this paper we would like to (partially) address the problem of number of the solutions of \eqref{eq:original_problem} with fixed wave speed.   Earlier, Champneys and McKenna \cite{champneys_mckenna} proved that there are at most 5 different travelling wave solutions in special form with fixed wave speeds from some nested subintervals of the range of wave speed in the case $b=0$. Upper and lower bounds of these intervals were found numerically by Maple.


\section{One-troughed travelling wave solutions}
\label{sec:symmetric_one_troughed}

In this section we will focus on a special type of solutions of \eqref{eq:original_problem} which can be described analytically thanks to the piecewise nonlinearity. The subject of our research is the number of such solutions that are different from each other for any fixed travelling wave speed.

First of all, we will substitute
\begin{equation*}
a u(x,\tau) = y(\sqrt[4]{a} x-c \sqrt{a} \tau) = y(t)
\end{equation*}
into the PDE in \eqref{eq:original_problem} and thereby transform it to an ODE with equilibrium $y \equiv 1$. Let us note, that $\sqrt[4]{a}|c|$ corresponds to the wave speed. With $c>0$ the wave travels to the right, with $c<0$ it travels to the left. From now on, for the simplicity, we will consider only $c>0$. Our results hold also for $-c$.

Further, we translate the stationary state to zero by putting $z(t) := y(t) - 1$. We also introduce the parameter $\xi = b/a$, $\xi \in \left[0,{1}\right)$. Thus, we come to the problem
\begin{equation}
\label{eq:transformed_ode}
\left\{ 
\begin{array}{ll} 
z^{(4)}+c^2 z''+ (z+1)^{+}-\xi (z+1)^{-} - 1 = 0,\\
z, \, z' \rightarrow 0~~\textup{for}~|t| \rightarrow +\infty.  
\end{array} 
\right.
\end{equation}
The ODE in \eqref{eq:transformed_ode} can also be rewritten as two linear ordinary differential equations
\begin{eqnarray}
\label{eq:first_ode}
z^{(4)}+c^2 z''+z = 0 &\qquad &\textup{for}~z \geq -1,\\
\label{eq:second_ode}
z^{(4)}+c^2 z''+\xi z = 1-\xi &\qquad &\textup{for}~z \leq -1.
\end{eqnarray}

The homoclinic solutions of \eqref{eq:transformed_ode} will be constructed as the solution of \eqref{eq:first_ode} decaying to zero for $|t| \rightarrow +\infty$ connected with the solution of \eqref{eq:second_ode} at points $t$ that satisfy $z(t) = -1$. We will be interested only in the case $c^2 \in \left(0,{2}\right)$ and $\xi = b/a \in \left[0, c^4/4\right)$ due to the existence result and the necessary condition in \cite{holubova_leva} or \cite{karageorgis_stalker} (cf. Figure \ref{fig:beta_star}).

There are several options how to distinguish between different travelling wave solutions. By various solutions we mean two waves that are not only a translation of each other but there is some basic difference between them. Champneys and McKenna in \cite{champneys_mckenna} introduce two ways of characterizing the solutions. In particular, first way is to count the number of times that the solution drops below the value $-1$. Such solutions are referred as \textit{multi-troughed} waves. An \textit{one-troughed} wave is a solution that drops under $-1$ only on one interval. The latter way is to count the number of local maxima of the solution in the region where $z \leq -1$. It is usually used to distinguish between several one-troughed waves. These solutions are referred as \textit{multiwiggle} waves.

In this section we will focus on the (even) one-troughed travelling waves using the terminology introduced in \cite{champneys_mckenna}, i.e., the solution for which there are just two points $t$ with $z(t)=-1$. First, we will look for their analytical description. The main goal is to show that there exist more than one one-troughed solution for some fixed admissible wave speeds.

The following paragraph is highly inspired by \cite{champneys_mckenna}. For the simplicity, we will focus on even solutions. First, the general solution of \eqref{eq:first_ode} that satisfies $z(t)\rightarrow 0$ for $t \rightarrow -\infty$ can be written as
\begin{equation}
\label{eq:general_sol_of_1}
z_1(t) = A \e^{\lambda t}\cos (\omega t + B)
\end{equation}
with $A$ and $B$ being free parameters and
\begin{equation}
\label{eq:lambda_omega}
\lambda = \dfrac{\sqrt{2-c^2}}{2} \qquad \textup{and} \qquad \omega = \dfrac{\sqrt{2+c^2}}{2}.
\end{equation}
The derivatives of \eqref{eq:general_sol_of_1} have the form
\begin{eqnarray}
\label{eq:first_der_of_1}
z_1'(t)&=& A \e^{\lambda t} \left(\lambda \cos (\omega t + B) - \omega \sin (\omega t + B)\right),\\
\label{eq:second_der_of_1}
z_1''(t) &=& A\e^{\lambda t} \left(\left(\lambda^2-\omega^2\right)\cos(\omega t + B)-2\lambda \omega \sin (\omega t + B)\right),\\
\label{eq:third_der_of_1}
z_1'''(t)&=&A\e^{\lambda t}\left(\left(\lambda^3-3\lambda\omega^2\right)\cos(\omega t + B)+ \left(\omega^3-3\lambda^2\omega\right)\sin (\omega t + B)\right).
\end{eqnarray}

For completeness of the text, we present here a one to one approach as in Section 2 of \cite{champneys_mckenna}. This is necessary to understand the follow-up part of our text. We will consider only a nontrivial solutions $z_1$ with $t \in \left(-\infty,0\right]$. We look for the smallest $t_1<0$ for which $z_1(t_1)=-1$. The solution $z_1$ will be connected with solution $z_2$ of \eqref{eq:second_ode} on the interval $[t_1,0)$ at the point $t_1$ and extended symmetrically and smoothly on $\left[ 0, + \infty\right)$. We parametrize $z_1'(t_1)=\theta$. Thus, we can write
\begin{eqnarray}
\label{eq:first_sol_parametrized}
\nonumber &z_1(t_1) = -1, \qquad z_1'(t_1) = \theta,\\
&z_1''(t_1) = 1 + \theta \sqrt{2-c^2}, \qquad z_1'''(t_1) = \sqrt{2-c^2} + \theta(1-c^2).
\end{eqnarray}
Now, we have to state the range for parameter $\theta$ in order to $z_1(t)\geq -1$ for all $t<t_1$. Since $\theta$ describes the slope of $z_1$ at the connection point $t_1$, it has to be negative for simple geometric reason. Thus, $\theta_{max}=0$. It remains to determine its lower bound $\theta_{min}$. This strict lower bound can be derived from computing the trajectory through the point parametrized by the upper bound $\theta_{max}
=0$ achieved at some $t_0$ until the next value $t_0+t^{\ast}$ such that $z_1(t_0+t^{\ast})=-1$ (see Figure \ref{fig:1}, left). Then $\theta_{min}:=z_1'(t_0+t^{\ast})$. Thus
\begin{eqnarray*}
-1 &=& z_1(t_0+t^{\ast}) = A\e^{\lambda(t_0+t^{\ast})}\cos\left(\omega(t_0+t^{\ast})+B\right),\\
\theta_{min} &=& z_1'(t_0+t^{\ast}) = A \e^{\lambda(t_0+t^{\ast})}\left(\lambda\cos\left(\omega(t_0+t^{\ast})+B\right)-\omega \sin\left(\omega(t_0+t^{\ast})+B\right)\right).
\end{eqnarray*}
Thanks to \eqref{eq:general_sol_of_1}, \eqref{eq:first_der_of_1}, \eqref{eq:first_sol_parametrized} and the fact that $z_1(t_0)=-1$ and $z_1'(t_0)=0$ we can write
\begin{eqnarray}
\label{eq:first_theta_min}
\e^{\lambda t^{\ast}}\left(\lambda \sin\left(\omega t^{\ast}\right)-\omega\cos\left(\omega t^{\ast}\right)\right)  &=& - \omega,\\
\label{eq:second_theta_min}
\e^{\lambda t^{\ast}}\left(\lambda^2+\omega^2\right)\sin\left(\omega t^{\ast}\right) &=& \omega \theta_{min}.
\end{eqnarray}
Using $\lambda^2+\omega^2=1$ (see \eqref{eq:lambda_omega}) and after a standard manipulation, we transform \eqref{eq:first_theta_min}--\eqref{eq:second_theta_min} into
\begin{eqnarray}
\label{eq:exp_theta_min}
\e^{2 \lambda t^{\ast}} & = & \theta_{min}^2 + 2\lambda \theta_{min} + 1,\\
\label{eq:cotg_theta_min}
\cot \left(\omega t^{\ast}\right) & = & \dfrac{\lambda \theta_{min}+1}{\omega \theta_{min}}.
\end{eqnarray}
Since $\omega t^{\ast} \in \left(\pi, 2\pi\right)$ (see Figure \ref{fig:1}, left) there exists exactly one solution of \eqref{eq:cotg_theta_min}
\begin{equation*}
t^{\ast} = \dfrac{1}{\omega}\left(\dfrac{3\pi}{2}-\arctan\left(\dfrac{\lambda \theta_{min}+1}{\omega \theta_{min}}\right)\right).
\end{equation*}
By substituting the value $t^{\ast}$ into \eqref{eq:exp_theta_min} we get unique $\theta_{min} = \theta_{min}(c)$. Its graph is depicted in Figure \ref{fig:1}, right. Up to this point, our calculations match with the ones stated in \cite{champneys_mckenna}.

\medskip

\begin{figure}[t]
\centerline{
  \setlength{\unitlength}{1mm}
  \begin{picture}(70, 45)(0,0)
    \put(0,2){\includegraphics[height=4.5cm]{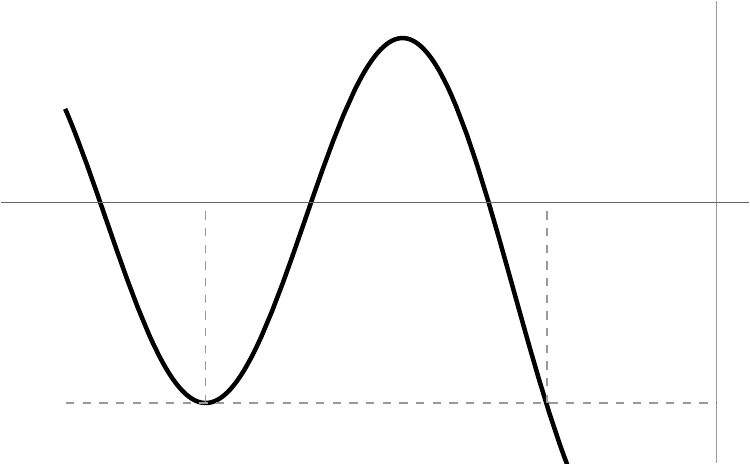}}
    \put(0,30){\makebox(0,0)[lb]{\footnotesize$t$}}         
    \put(71,45){\makebox(0,0)[lb]{\footnotesize$z$}} 
    \put(19,29){\makebox(0,0)[lb]{\footnotesize$t_0$}} 
    \put(50,29){\makebox(0,0)[lb]{\footnotesize$t_0+t^{\ast}$}} 
    \put(70,7){\makebox(0,0)[lb]{\footnotesize$-1$}} 
    \put(70,29){\makebox(0,0)[lb]{\footnotesize$0$}}            
  \end{picture} 
  \hspace{2.5cm}
  \begin{picture}(45, 45)(0,0)
    \put(0,0){\includegraphics[height=4.5cm]{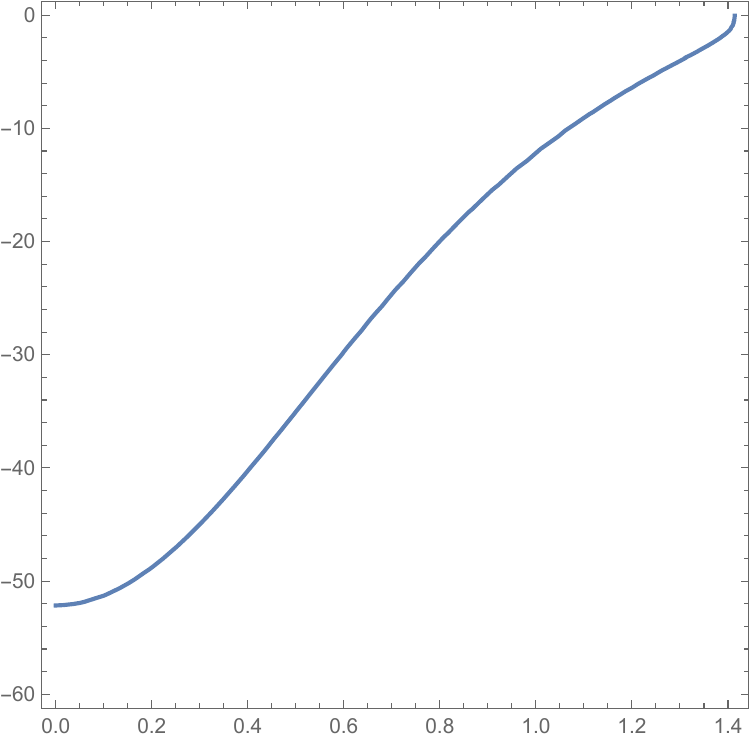}}
    \put(-4,46){\makebox(0,0)[lb]{\footnotesize$\theta_{min}$}}         
    \put(47,2){\makebox(0,0)[lb]{\footnotesize$c$}}            
  \end{picture}   
  }
\caption{On the left, the illustration of the process of finding $\theta_{min}$. On the right, the dependence of $\theta_{min}$ on $c$.}
\label{fig:1}
\end{figure}

Further, we write the even general solution of \eqref{eq:second_ode} in the following form
\begin{equation*}
z_2(t) = \gamma \cos\left(\kappa_1 t \right) + \delta \cos\left(\kappa_2 t \right) + \dfrac{1-\xi}{\xi}
\end{equation*}
where $\gamma, \,\delta, \in \R$,
\begin{equation*}
\kappa_1^2 = \dfrac{c^2-\sqrt{c^4-4\xi}}{2} \qquad \textup{and} \qquad \kappa_2^2 = \dfrac{c^2+\sqrt{c^4-4\xi}}{2}.
\end{equation*}
Thus $0<\kappa_1<\kappa_2$. Additionally, there exists some $t_2<t_1<0$ for which $z_2(t_2) = (1-\xi)/\xi$, i.e., $\gamma \cos (\kappa_1 t_2)=-\delta \cos (\kappa_2 t_2)$. Definitely, $t_2 > -\pi/\kappa_1$ and thus $t_1 \in (-\pi/\kappa_1,0)$. Next, we will match $z_2$ and its first three derivatives with the solution $z_1$ given by \eqref{eq:general_sol_of_1}, in particular we use \eqref{eq:first_sol_parametrized}. Then, we can write
\begin{eqnarray}
\label{eq:1}
\gamma \cos \left(\kappa_1 t_1\right) + \delta \cos \left(\kappa_2 t_1\right) + \frac{1-\xi}{\xi} &=& -1,\\
\label{eq:2}
-\gamma \kappa_1 \sin \left(\kappa_1 t_1\right)-\delta \kappa_2 \sin \left(\kappa_2 t_1\right) &=& \theta,\\
\label{eq:3}
-\gamma \kappa_1^2 \cos \left(\kappa_1 t_1\right) - \delta \kappa_2^2 \cos \left(\kappa_2 t_1\right) &=& 1 + \theta \sqrt{2-c^2},\\
\label{eq:4}
\gamma \kappa_1^3 \sin \left(\kappa_1 t_1\right) + \delta \kappa_2^3 \sin \left(\kappa_2 t_1\right) &=& \sqrt{2-c^2}+\theta \left(1-c^2\right).
\end{eqnarray}
By eliminating $\gamma$, $\delta$ and $t_1<0$ from \eqref{eq:1}--\eqref{eq:4}, we get equations
\begin{equation}
\label{eq:L_eq}
L(c,\xi,\theta) = k
\end{equation}
where
\begin{eqnarray}
\nonumber
L(c,\xi,\theta):=\dfrac{1}{2}\left(1+\frac{\kappa_2}{\kappa_1}\right)+\dfrac{1}{\pi}\arctan\left(\dfrac{\kappa_2(-\kappa_1^2+\xi+\xi\theta\sqrt{2-c^2})}{\xi (\theta \kappa_1^2 + \sqrt{2-c^2}+ \theta (1-c^2))}\right)\\
\nonumber
-\dfrac{\kappa_2}{\kappa_1 \pi}\arctan \left(\dfrac{\kappa_1(-\kappa_2^2+\xi+\xi\theta\sqrt{2-c^2})}{\xi (\theta \kappa_2^2 + \sqrt{2-c^2}+\theta (1-c^2))}\right),
\end{eqnarray}
$\theta \in (\theta_{min}(c),0)$ and $k \in \N$. Thus, we reduce the search for one-troughed travelling waves to~the~search for solutions of the system of equations \eqref{eq:L_eq}. For further use, we present here the parameters
\begin{eqnarray}
\nonumber
\gamma^2 &=& \dfrac{(\xi - 1)(c^2 + \sqrt{c^4-4\xi}+2\theta^2\xi)}{\xi(c^4-4\xi)(-c^2+\sqrt{c^4-4\xi})},\\
\nonumber
\delta^2 &=& \dfrac{(\xi - 1)(-c^2+\sqrt{c^4-4\xi}-2\theta^2\xi)}{\xi(c^4-4\xi)(c^2+\sqrt{c^4-4\xi})},\\
\nonumber
t_1 &=& \dfrac{1}{\kappa_1}\arctan\left(\dfrac{\kappa_1(-\kappa_2^2 + \xi + \xi\theta\sqrt{2-c^2})}{\xi(\theta\kappa_2^2 + \sqrt{2-c^2}+\theta(1-c^2)}\right)-\dfrac{\pi}{2\kappa_1} \qquad \in \left(-\dfrac{\pi}{\kappa_1},0\right).
\end{eqnarray}
It is easy to see that $|\gamma|>|\delta|$.


\section{Properties of $L$ and the first estimates on the number of one-troughed solutions}
\label{sec:number_one_troughed}

In the following part, we will examine the properties of $L$, which enable us to estimate the number of one-troughed travelling wave solutions of \eqref{eq:original_problem}. First of all, we rescale the parameter $\xi$ by $\xi = p c^4/4$ with $p \in (0,{1})$, and we use $L(c,p,\theta) := L(c,pc^4/4,\theta)$, for simplicity. There are two discontinuities of $L$
\begin{equation*}
\theta_{1,2} = \dfrac{-2\sqrt{2-c^2}}{2-c^2 \mp c^2 \sqrt{1-p}}.
\end{equation*}
From $z_2(0)<0$, $|\gamma|>|\delta|$ and $\kappa_1 t_1 \in (-\pi,0)$ it follows that $\theta < \theta_2<0$. Thus, the admissible parameters must satisfy
\begin{equation}
\label{eq:interval_theta}
\theta \in \left(\theta_{min}(c),\theta_2(c,p)\right).
\end{equation}
There does not exist any one-troughed travelling wave solution if the interval in \eqref{eq:interval_theta} is empty. See the orange area in Figure \ref{fig:4}, left. Moreover, for all admissible values of $c$, $p$ and $\theta$ the partial derivative $L_{\theta}'(c,p,\theta)<0$ which means that $L$ is decreasing  in $\theta$ on relevant subintervals of $(\theta_{min},\theta_2)$.

\begin{figure}[t]
\centerline{
  \setlength{\unitlength}{1mm}
  \begin{picture}(75, 45)(0,0)
    \put(1,1){\includegraphics[height=4.5cm]{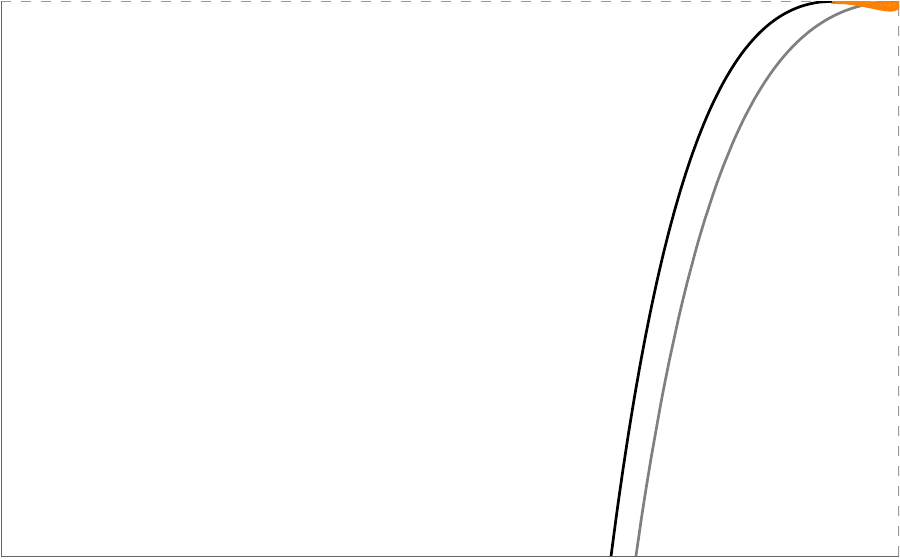}}  
    \put(75,0){\makebox(0,0)[lb]{\footnotesize$c$}}       
    \put(0,47){\makebox(0,0)[lb]{\footnotesize$p$}} 
    \put(64,35){\makebox(0,0)[lb]{\footnotesize$p_0$}}
    \put(0,-1){\makebox(0,0)[lb]{\tiny$0$}}
    \put(-1,45){\makebox(0,0)[lb]{\tiny$1$}}
    \put(71,-2){\makebox(0,0)[lb]{\tiny$\sqrt{2}$}}  
    \put(52,-2){\makebox(0,0)[lb]{\tiny$1$}}     
  \end{picture} 
  \hfill
  \begin{picture}(75, 45)(0,0)
    \put(1,1){\includegraphics[height=4.5cm]{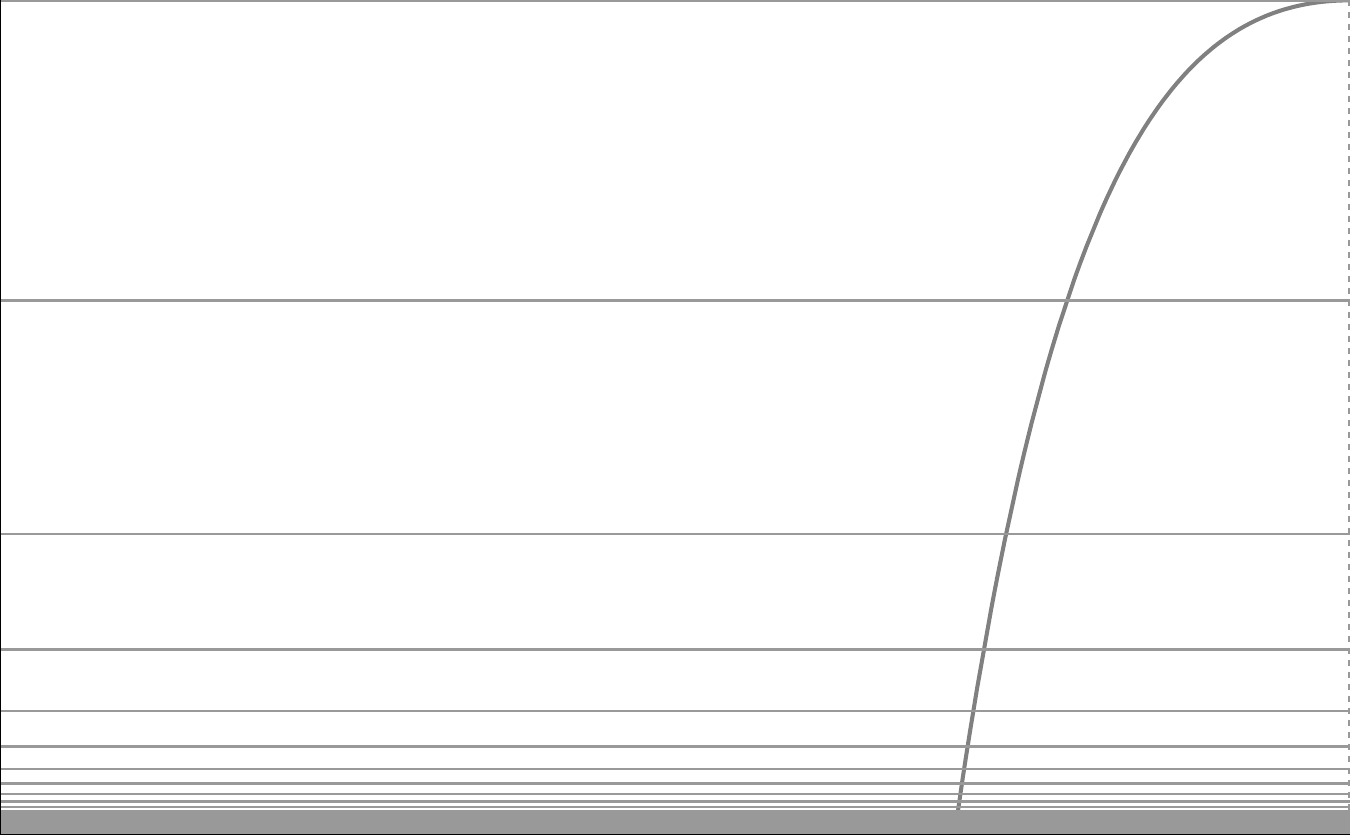}}       
	\put(75,0){\makebox(0,0)[lb]{\footnotesize$c$}}       
    \put(0,47){\makebox(0,0)[lb]{\footnotesize$p$}}  
    \put(64,35){\makebox(0,0)[lb]{\footnotesize$p_0$}} 
    \put(75,45){\makebox(0,0)[lb]{\footnotesize$p_1$}} 
    \put(75,29){\makebox(0,0)[lb]{\footnotesize$p_2$}} 
    \put(75,17){\makebox(0,0)[lb]{\footnotesize$p_3$}} 
    \put(75,10){\makebox(0,0)[lb]{\footnotesize$p_4$}} 
    \put(76,5){\makebox(0,0)[lb]{\footnotesize$\vdots$}}  
    \put(0,-1){\makebox(0,0)[lb]{\tiny$0$}}
    \put(-1,45){\makebox(0,0)[lb]{\tiny$1$}}
    \put(71,-2){\makebox(0,0)[lb]{\tiny$\sqrt{2}$}}  
    \put(52,-2){\makebox(0,0)[lb]{\tiny$1$}}   
  \end{picture}        
  }
\caption{On the left, the illustration of the empty interval for admissible $\theta$. The orange area corresponds to the situation where $\theta_{min}\geq \theta_2$, the black curve is determined by $\theta_{min}=\theta_1$ and the gray curve $p_0$ is given by \eqref{eq:p0}. On the right, the illustration of strips in the $cp$ plane given by \eqref{eq:pk} with different upper bounds for the number of solutions.}
\label{fig:4}
\end{figure}

\begin{lemma}
\label{lemma:p>p0}
Let $c \in \left(0,\sqrt{2}\right)$, $p \in \left(0,{1}\right)$ and $p>p_0$ where
\begin{equation}
\label{eq:p0}
p_0:= \dfrac{4}{c^4}(c^2-1).
\end{equation}
If ratio $\kappa_2/\kappa_1 <k$, $k \in \N$, then \eqref{eq:original_problem} has at most $2 \left(\lceil k/2\rceil -1\right)$ even one-trouhged travelling wave solutions.
\end{lemma}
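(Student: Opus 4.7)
The plan is to interpret even one-troughed waves as pairs $(\theta,k')$ with $\theta\in(\theta_{min}(c),\theta_2(c,p))$ and $k'\in\N$ satisfying $L(c,p,\theta)=k'$, and then to bound the number of such pairs when $\kappa_2/\kappa_1<k$. The strategy rests on three structural facts about $L$: (i) by the remark preceding the lemma, $L$ is strictly decreasing in $\theta$ on every subinterval of $(\theta_{min},\theta_2)$ on which it is continuous; (ii) the only candidate discontinuity of $L$ inside this interval is $\theta_1$, coming from the singularity of the first arctangent's argument; and (iii) the explicit formula for $L$ together with $\arctan(\cdot)\in(-\pi/2,\pi/2)$ bounds $L$ between $0$ and $1+\kappa_2/\kappa_1$.

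First I would check that the hypothesis $p>p_0$ is equivalent to
\[
\kappa_1^2+1-c^2=\tfrac12\bigl(2-c^2-c^2\sqrt{1-p}\bigr)>0,
\]
so that the denominator $\xi\bigl(\theta(\kappa_1^2+1-c^2)+\sqrt{2-c^2}\bigr)$ of the first arctangent's argument is a strictly increasing affine function of $\theta$ vanishing only at $\theta_1<0$. A quick sign check of the numerator at $\theta_1$ then yields $\arctan(\alpha_1(\theta))\to\pm\pi/2$ as $\theta\to\theta_1^\mp$, so $L$ has a jump of exactly $-1$ at $\theta_1$; the second arctangent is continuous there, so this is the entire jump. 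Combined with the monotonicity, this forces, for each $k'\in\N$, at most one solution of $L(c,p,\theta)=k'$ in each of the subintervals $(\theta_{min},\theta_1)$ and $(\theta_1,\theta_2)$, and makes the two corresponding image intervals disjoint, separated by a gap of length exactly $1$.

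Finally, the bound $L\in(0,1+\kappa_2/\kappa_1)\subset(0,1+k)$ restricts the admissible right-hand sides to $k'\in\{1,\ldots,k\}$. A more careful look at the endpoint limits of $L$---using that the second arctangent saturates to $\pm\pi/2$ as $\theta\to\theta_2^-$, the first saturates to $\pm\pi/2$ at $\theta_1^\pm$, and $L(\theta_{min})$ can be evaluated explicitly from the tangential-matching condition defining $\theta_{min}$---is to show that each of the two monotone pieces hits at most $\lceil k/2\rceil-1$ integers, producing the claimed bound $2(\lceil k/2\rceil-1)$. The hard part will be this final counting step: the naive argument based only on $L<1+k$ yields at most $k$ solutions in total, and sharpening this to $2(\lceil k/2\rceil-1)$ requires pairing the explicit endpoint limits of $L$ with the unit-size gap at $\theta_1$ so that each monotone image is confined to an interval containing at most $\lceil k/2\rceil-1$ integers.
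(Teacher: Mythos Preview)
Your approach is essentially the paper's: reduce to counting integers in the image of $L(c,p,\cdot)$, use that $L$ is strictly decreasing in $\theta$ with a single jump of $-1$ at $\theta_1$ (which, under $p>p_0$, lies to the left of $\theta_2$), and then bound the image of each monotone branch. The structural analysis in your items (i)--(iii), and your verification that $p>p_0$ is equivalent to $\kappa_1^2+1-c^2>0$, all match the paper.

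Two points where the paper's execution is slightly sharper than your outline. First, instead of working on $(\theta_{min},\theta_2)$ and trying to evaluate $L(\theta_{min})$ from the matching condition, the paper simply enlarges the domain to $(-\infty,\theta_2)$; this always contains $\theta_1$, and $L(c,p,-\infty)$ has a clean closed form. Second---and this is the key to the sharp count $2(\lceil k/2\rceil-1)$ that you flag as the ``hard part''---the crude bound $L\in(0,1+\kappa_2/\kappa_1)$ is improved by observing that at every endpoint one of the two arctangents is pinned. At $\theta_1^\pm$ the first arctangent saturates to $\pm\pi/2$; at $\theta_2^-$ the second arctangent saturates; and as $\theta\to-\infty$ both arctangent arguments tend to the finite positive numbers $-\theta_1\kappa_2$ and $-\theta_2\kappa_1$ (here $p>p_0$ forces $\theta_1<\theta_2<0$), so both arctangents lie in $(0,\pi/2)$. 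Plugging these sign constraints into the formula for $L$ gives $L(-\infty)\in(1/2,\,1+\kappa_2/(2\kappa_1))$, $L(\theta_1^-)\in(1,\,1+\kappa_2/(2\kappa_1))$, $L(\theta_1^+)\in(0,\,\kappa_2/\kappa_1)$, and $L(\theta_2^-)\in(0,1)$, whence the two monotone images lie in $(1,\,1+\kappa_2/(2\kappa_1))$ and $(0,\,\kappa_2/(2\kappa_1))$ respectively. Since $\kappa_2/\kappa_1<k$, each image contains at most $\lceil k/2\rceil-1$ integers, which is exactly the bound you were aiming for.
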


\begin{proof}
Let us take any fixed $c\in \left(0,\sqrt{2}\right)$ and $p \in (0,1)$ such that $p>p_0$. Then $\theta_1<\theta_2<0$. We can estimate the range $R(L)$ by finding upper and lower bounds for values of $L$ for $\theta \in \left(-\infty,\theta_2\right)$. Function $L$ is continuous and decreasing in $\theta$ on intervals $\left(-\infty,\theta_1\right)$ and $\left(\theta_1, \theta_2\right)$. Simply,
\begin{equation*}
\lim_{\theta \rightarrow -\infty} L\left(c,p,\theta\right) = L(c,p,-\infty) = \dfrac{1}{2}\left(1+\dfrac{\kappa_2}{\kappa_1}\right)+\dfrac{1}{\pi}\arctan \left(-\theta_1 \kappa_2\right)-\dfrac{\kappa_2}{\pi \kappa_1}\arctan \left(-\theta_2\kappa_1\right).
\end{equation*}
Both arguments of arctangent functions are positive, thus $L(c,p,-\infty) \in \left(1/2,1+\kappa_2/(2\kappa_1)\right)$. By the same argumentation, we can get limits for $L(c,p,\theta_1-) \in \left(1,1+\kappa_2/(2\kappa_1)\right)$, $L(c,p,\theta_1+) \in \left(0,\kappa_2/\kappa_1\right)$ and $L(c,p,\theta_2-) \in \left(0,1\right)$. Moreover, $L(c,p,\theta_1-)-L(c,p,\theta_1+)=1$. Let us denote restrictions of $L$ as $L_1:=L$ for $\theta \in \left(-\infty,\theta_1\right)$ and $L_2:=L$ on $\left(\theta_1,\theta_2\right)$. Then $R(L_1) \subset \left(1,1+\kappa_2/(2\kappa_1)\right)$ and $R(L_2) \subset \left(0,\kappa_2/(2\kappa_1)\right)$. Lenghts of both intervals depend on the ratio
\begin{equation}
\label{eq:ratio_kappa}
\frac{\kappa_2}{\kappa_1}=\sqrt{\dfrac{1+\sqrt{1-p}}{1-\sqrt{1-p}}}.
\end{equation}
Thus, for $\kappa_2/\kappa_1 <k$, $k \in \N$, $R(L_1) \subset \left(1,1+k/2\right)$, $R(L_2) \subset \left(0,k/2\right)$ and the number of required solutions can be at most $2 \left(\lceil k/2\rceil -1\right)$.
\end{proof}

The next assertion is a direct consequence of Lemma \ref{lemma:p>p0}. 

\begin{corollary}
\label{lemma:0_sol}
Let $c \in \left(0,\sqrt{2}\right)$, $p \in \left(16/25,{1}\right)$ and $p>p_0$. Then there is no even one-trouhged travelling wave solution of \eqref{eq:original_problem}.
\end{corollary}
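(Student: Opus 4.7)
The plan is to invoke Lemma~\ref{lemma:p>p0} with the smallest integer $k$ that makes the upper bound $2(\lceil k/2\rceil - 1)$ vanish. Since $\lceil 2/2\rceil - 1 = 0$, the choice $k = 2$ suffices (note that $k=1$ would require $\kappa_2/\kappa_1 < 1$, which is impossible because $\kappa_2 > \kappa_1$ by construction). The task therefore reduces to verifying that $\kappa_2/\kappa_1 < 2$ whenever $p > 16/25$.

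To carry out this verification I would start from the explicit formula \eqref{eq:ratio_kappa}: squaring both sides, $\kappa_2/\kappa_1 < 2$ becomes $(1+\sqrt{1-p})/(1-\sqrt{1-p}) < 4$, which after a short routine manipulation is equivalent to $\sqrt{1-p} < 3/5$, i.e., $p > 16/25$. All remaining hypotheses of Lemma~\ref{lemma:p>p0} — namely $c \in (0,\sqrt{2})$, $p \in (0,1)$, and $p > p_0$ — are directly inherited from the statement of the corollary.

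Applying Lemma~\ref{lemma:p>p0} with $k = 2$ then yields that \eqref{eq:original_problem} admits at most $2(\lceil 2/2\rceil - 1) = 0$ even one-troughed travelling wave solutions, which is the desired conclusion. There is no genuine obstacle to overcome: the corollary amounts to pinpointing the strip $p > \max\{p_0,\,16/25\}$ of the $cp$-plane on which the upper bound from the lemma first hits zero, as illustrated by the topmost region in Figure~\ref{fig:4} (right).
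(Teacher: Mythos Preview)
Your proposal is correct and follows exactly the intended route: the paper states that the corollary is a direct consequence of Lemma~\ref{lemma:p>p0}, and the computation you give---that $\kappa_2/\kappa_1<2$ is equivalent to $p>16/25$ via \eqref{eq:ratio_kappa}, so that the bound $2(\lceil 2/2\rceil-1)=0$ applies---is precisely the argument encoded in the paper's formula \eqref{eq:pk} with $k=2$.
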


Analogously, we can state the following lemma.

\begin{lemma}
\label{lemma:p<p0}
Let $c \in \left(0,\sqrt{2}\right)$, $p \in \left(0,{1}\right)$ and $p<p_0$. If $\kappa_2/\kappa_1 < k$ (cf. \eqref{eq:ratio_kappa}), $k \in \N$, then there is at most $\lceil (k+1)/2 \rceil -1$ even one-trouhged travelling wave solutions of \eqref{eq:original_problem}.
\end{lemma}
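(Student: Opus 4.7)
The plan is to adapt the structure of the proof of Lemma \ref{lemma:p>p0} to the single-branch situation $p<p_0$: I will show that on the admissible interval there is no discontinuity of $L$, that $L$ is monotone there, and that its range is contained in an interval short enough to carry only $\lceil(k+1)/2\rceil-1$ positive integers.

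First I would verify $\theta_1>0>\theta_2$. The hypothesis $p<p_0=4(c^2-1)/c^4$ forces $c^2>1$, and a short algebraic manipulation shows that it is equivalent to $2-c^2-c^2\sqrt{1-p}<0$. This flips the sign of the denominator in the formula for $\theta_1$, making it positive, while $\theta_2$ remains negative. Hence the admissible interval $(\theta_{min}(c),\theta_2(c,p))$ lies entirely in $(-\infty,\theta_1)$, where $L(c,p,\cdot)$ is continuous; combined with $L'_\theta<0$, the function $L(c,p,\cdot)$ is continuous and strictly decreasing on the entire admissible interval. Consequently, for each $n\in\N$ the equation $L(c,p,\theta)=n$ has at most one root, and the number of even one-troughed solutions equals the number of positive integers lying in the range $R(L)$.

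Next I would bound $R(L)$ using the same limit computations as in Lemma \ref{lemma:p>p0}. For the upper bound, the formula
\begin{equation*}
L(c,p,-\infty)=\frac{1}{2}\left(1+\frac{\kappa_2}{\kappa_1}\right)+\frac{1}{\pi}\arctan(-\theta_1\kappa_2)-\frac{\kappa_2}{\pi\kappa_1}\arctan(-\theta_2\kappa_1)
\end{equation*}
is still valid, but now $-\theta_1\kappa_2<0$ and $-\theta_2\kappa_1>0$, so both arctan terms contribute non-positively; this yields $L(c,p,-\infty)<\frac{1}{2}+\frac{\kappa_2}{2\kappa_1}$. For the lower bound, a sign analysis as $\theta\to\theta_2-$ shows that the argument of the second arctan blows up to $+\infty$, so its contribution is $-\kappa_2/(2\kappa_1)$, cancelling the corresponding summand in $\frac{1}{2}(1+\kappa_2/\kappa_1)$ and leaving $L(c,p,\theta_2-)=\frac{1}{2}+\frac{1}{\pi}\arctan(\cdots)>0$. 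Thus $R(L)\subset\left(0,\frac{1}{2}+\frac{\kappa_2}{2\kappa_1}\right)\subset\left(0,\frac{k+1}{2}\right)$ by $\kappa_2/\kappa_1<k$, so $R(L)$ contains at most $\lceil(k+1)/2\rceil-1$ positive integers, which is the claimed bound.

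The main obstacle I expect is the sign analysis at $\theta_2-$ under $p<p_0$: the quantity $\kappa_1^2+1-c^2$ has the opposite sign from the one in the proof of Lemma \ref{lemma:p>p0}, and one must verify carefully that the numerator of the second arctan at $\theta=\theta_2$ stays negative so that the argument blows up to $+\infty$ rather than $-\infty$. The algebra is elementary but easy to slip on; apart from it the proof parallels the earlier argument.
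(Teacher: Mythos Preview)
Your argument is correct and matches the paper's (very terse) proof: bound the range of $L$ on the admissible $\theta$-interval by $(0,\tfrac12+\tfrac{\kappa_2}{2\kappa_1})$ and count integers. One small slip in wording: you say the admissible interval lies in $(-\infty,\theta_1)$ ``where $L$ is continuous,'' but $L$ also has the jump at $\theta_2\in(-\infty,\theta_1)$; the clean statement is that $L$ is continuous on $(-\infty,\theta_2)$, which already contains $(\theta_{min},\theta_2)$ --- your subsequent endpoint analysis is unaffected. For the sign check you flag at the end, note that the numerator of the second arctan at $\theta=\theta_2$ equals $\kappa_1\bigl(\xi(1+\theta_2\sqrt{2-c^2})-\kappa_2^2\bigr)$ with $0<1+\theta_2\sqrt{2-c^2}<1$ when $p<p_0$, and $\xi=\kappa_1^2\kappa_2^2<\kappa_2^2$ since $\kappa_1^2<c^2/2<1$; hence the numerator is negative, the argument blows up to $+\infty$, and your lower bound $L(\theta_2-)>0$ goes through.
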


\begin{proof}
By the same \sloppy argumentation as in the proof of Lemma \ref{lemma:p>p0}, we come to the estimate \mbox{$R(L) \subset \left(0,1/2+\kappa_2/(2\kappa_1)\right)$}. For $\kappa_2/\kappa_1<k$ the length of the interval is less than $(k+1)/2$, thus there can be at most $\lceil (k+1)/2 \rceil -1$ even one-troughed solutions.
\end{proof}

\begin{remark}
If $p<p_0$ we are not able to come to the same conclusion as in Corollary \ref{lemma:0_sol} and find a region in $cp$ plane where there is no even one-trouhged travelling wave solution (except in the area with empty interval for admissible $\theta$). To sum up results of both Lemmas \ref{lemma:p>p0} and \ref{lemma:p<p0}, the inequality $\kappa_2/\kappa_1<k, \, k \in \N$, can be equivalently written as 
\begin{equation}
\label{eq:pk}
p>p_k:=1-\left(\frac{k^2-1}{k^2+1}\right)^2.
\end{equation}
Values of $p_k$ divide the $cp$ plane into strips where there are at most $2 \left(\lceil k/2\rceil -1\right)$ one-troughed travelling wave solutions for $p>p_0$ and at most $\lceil (k+1)/2 \rceil -1$ one-troughed travelling wave solutions for $p<p_0$. For the illustration see Figure \ref{fig:4}, right.
\end{remark}

\begin{figure}[t]
\centerline{
  \setlength{\unitlength}{1mm}
  \begin{picture}(140, 70)(0,0)
    \put(10,0){\includegraphics[height=7.0cm,trim={0cm 0 0.15cm 0},clip]{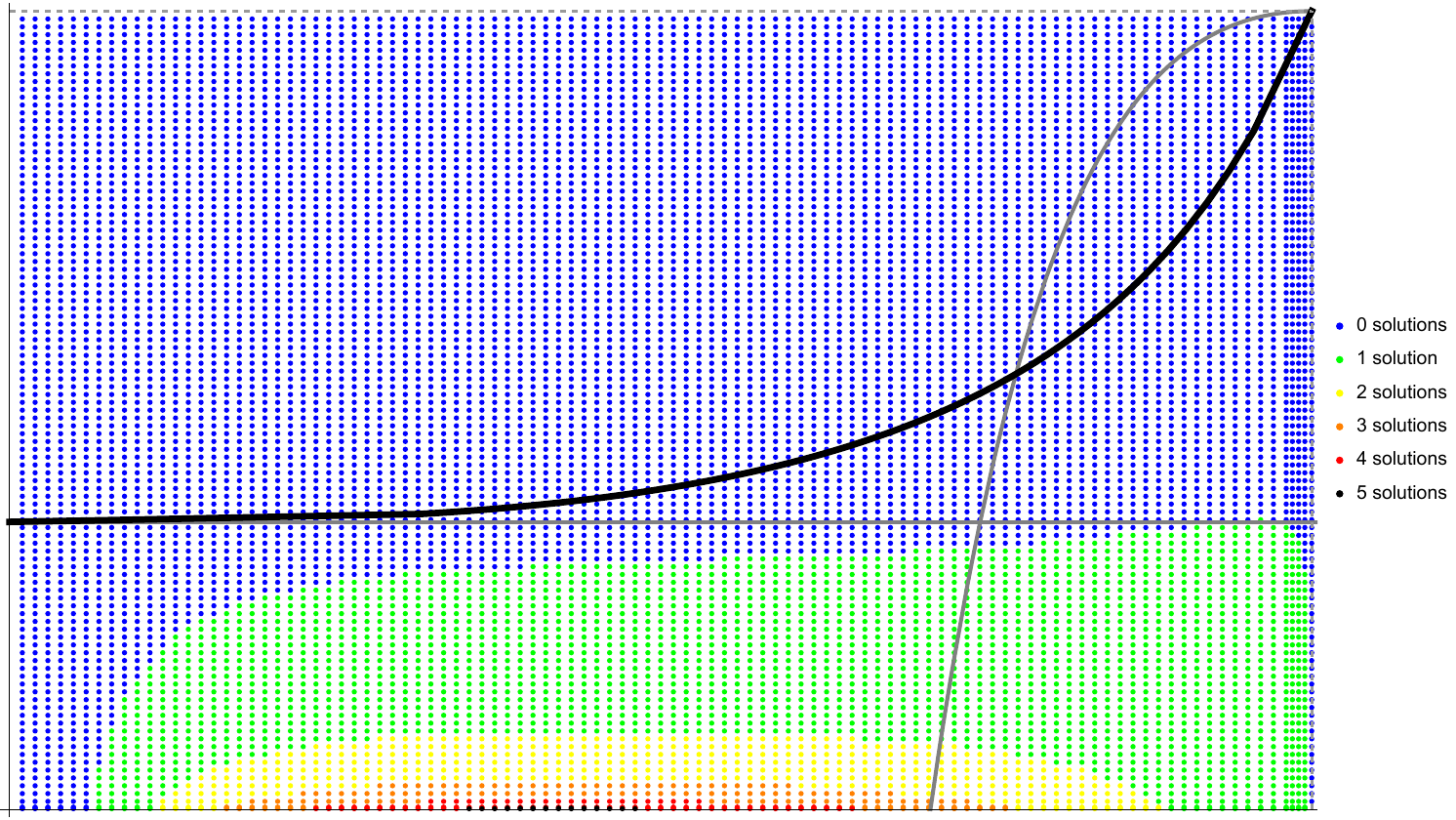}}
    \put(8,70){\makebox(0,0)[lb]{\footnotesize$p$}}         
	\put(125,0){\makebox(0,0)[lb]{\footnotesize$c$}}       
    \put(124,65){\makebox(0,0)[lb]{\footnotesize$\beta^{\ast}$}}  
    \put(114,70){\makebox(0,0)[lb]{\footnotesize$p_0$}}  
    \put(7,24){\makebox(0,0)[lb]{\tiny$\frac{9}{25}$}}   
    \put(9,-1){\makebox(0,0)[lb]{\tiny$0$}}
    \put(9,68){\makebox(0,0)[lb]{\tiny$1$}}
    \put(120,-2){\makebox(0,0)[lb]{\tiny$\sqrt{2}$}}  
    \put(89,-2){\makebox(0,0)[lb]{\tiny$1$}}   
  \end{picture}    
  }
\caption{Number of one-troughed travelling waves for particular pairs $c$ and $p$, with $\beta^{\ast}$ from \citep{holubova_leva_necesal}. For every $p<\beta^{\ast}$ there exists a travelling wave solution.}
\label{fig:2}
\end{figure}

\section{Maximum number of one-trouged solutions}
\label{sec:max}
The above stated estimates on the maximum number of solutions from Lemmas \ref{lemma:p>p0} and \ref{lemma:p<p0} say that for $p \rightarrow 0$ the number of different solutions for fixed pair $(c,p)$ could theoretically go up to infinity. However, the result for $p=0$ in \cite{champneys_mckenna} gives us the existence of at most 5 different one-troughed waves for fixed $c$. On top of that,
\begin{equation*}
\lim_{p\rightarrow 0}L(c,p,\theta)=\dfrac{1}{\pi}\arctan\left(\dfrac{c(c^2-1)\theta-c\sqrt{2-c^2}}{c^2-1+c^2\theta \sqrt{2-c^2}}\right)-\dfrac{c}{\pi}\left(\sqrt{2-c^2}+\theta_{min}(c)\right)=:\tilde{L}(c,\theta)
\end{equation*}
where transformed $\tilde{L}$ corresponds to function $L$ in Section 2 in \cite{champneys_mckenna}.
Thus, we should also expect at most 5 one-troughed solutions for $p\rightarrow 0$ in our case. This is also consistent with our visualisation, see Section \ref{sec:experiments}. In this section, we will focus on proving similar result. First, we present an auxiliary lemma.

\begin{lemma}
\label{lemma:sup_L}
Let $c \in (0,\sqrt{2})$ and $(p,\theta) \in \Omega_{p,\theta} :=(0,1)\times(\theta_{min}(c),\theta_2(c,p))$ with $(\theta_{min},\theta_2) \neq \emptyset$. Then
\begin{equation}
\label{eq:sup_L}
\sup\limits_{(p,\theta)\in \Omega_{p,\theta}} L(c,p,\theta) \leq 1 - \dfrac{c}{\pi}\left(\sqrt{2-c^2}+\theta_{min}(c)\right)=:L_{s}(c).
\end{equation}
\end{lemma}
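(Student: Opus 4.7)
The plan is to reduce the two-dimensional supremum over $(p,\theta)\in\Omega_{p,\theta}$ to a one-dimensional problem by exploiting monotonicity of $L$ in $\theta$, and then to identify the resulting left-endpoint boundary value.

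First I would establish that
\[
\sup_{\theta\in(\theta_{min}(c),\theta_2(c,p))} L(c,p,\theta)=\lim_{\theta\to\theta_{min}(c)^+}L(c,p,\theta)
\]
for every admissible $p$. The inequality $L'_\theta(c,p,\theta)<0$ recorded before Lemma \ref{lemma:p>p0} makes $L$ strictly decreasing in $\theta$ on every maximal continuity interval, so on each such interval the supremum is the left-endpoint limit. The only potential interior discontinuity is at $\theta_1$, where $L(c,p,\theta_1-)-L(c,p,\theta_1+)=1$ is a positive downward jump (noted in Section \ref{sec:number_one_troughed}). A short case split on the sign of $p-p_0$ is then enough: when $p<p_0$ one has $\theta_1>0>\theta_2$ and the admissible range lies in a single continuous branch; when $p>p_0$ and $\theta_1\in(\theta_{min},\theta_2)$, the downward jump together with monotonicity makes the supremum on the right branch strictly smaller than the one on the left branch, so again the overall supremum is achieved as $\theta\to\theta_{min}(c)^+$.

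Next I would evaluate this boundary limit. The algebraic identity
\[
\tfrac{1}{2}\Bigl(1+\tfrac{\kappa_2}{\kappa_1}\Bigr)-\tfrac{\kappa_2}{\pi\kappa_1}\arctan(A_1)=\tfrac{1}{2}+\tfrac{\kappa_2}{\pi\kappa_1}\arccot(A_1),
\]
valid with the convention $\arccot\colon\R\to(0,\pi)$, rewrites $L$ as $\tfrac{1}{2}+\tfrac{1}{\pi}\arctan(A_2)+\tfrac{\kappa_2}{\pi\kappa_1}\arccot(A_1)$ and absorbs the potentially large ratio $\kappa_2/\kappa_1$ into a bounded trigonometric factor. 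Substituting $\theta=\theta_{min}(c)$ and using the defining relations \eqref{eq:exp_theta_min}--\eqref{eq:cotg_theta_min}, together with $\lambda^2+\omega^2=1$, $\kappa_1^2+\kappa_2^2=c^2$ and $\kappa_1\kappa_2=\sqrt{\xi}$, should simplify $A_1$ and $A_2$ to forms in which $\arctan(A_2)\leq\pi/2$ and the remaining $\arccot$ term combines with the constant $\tfrac12$ to give $\tfrac{1}{2}-\tfrac{c}{\pi}\bigl(\sqrt{2-c^2}+\theta_{min}(c)\bigr)$. Adding the two halves produces exactly $L_s(c)$, which is the sought uniform upper bound. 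The geometric meaning of $\theta_{min}(c)$ (the profile $z_1$ tangentially grazes $-1$ before re-entering $\{z\ge -1\}$) is what makes the combination $\sqrt{2-c^2}+\theta_{min}(c)$ appear.

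The main obstacle is this final algebraic simplification. The arguments $A_1$ and $A_2$ at $\theta=\theta_{min}(c)$ are unwieldy rational combinations of $\kappa_1,\kappa_2,\theta_{min},\lambda,\omega$, and the required cancellations rely on the transcendental relations defining $\theta_{min}$; one must also track the $\arccot$ branch carefully across the whole region, in particular near $p\to 0^+$ (where $\kappa_2/\kappa_1\to\infty$) and near $p=p_0$ (where $\theta_1$ crosses $\theta_{min}$). If the direct simplification proves too heavy, an alternative is to prove $\partial_p L(c,p,\theta_{min})\leq 0$ and then invoke the stated identity $\lim_{p\to 0^+} L(c,p,\theta)=\tilde L(c,\theta)$ to identify the limiting value with $L_s(c)$.
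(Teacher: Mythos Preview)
Your reduction to $\theta\to\theta_{min}(c)^+$ via $L'_\theta<0$ and the positive jump at $\theta_1$ is correct and matches the paper verbatim. The trouble is in the second step.

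The direct simplification you propose cannot succeed as written: the boundary value $L(c,p,\theta_{min})$ genuinely depends on $p$, and in particular the term $\dfrac{\kappa_2}{\pi\kappa_1}\arccot(A_1)\big|_{\theta=\theta_{min}}$ is \emph{not} equal to $-\dfrac{c}{\pi}\bigl(\sqrt{2-c^2}+\theta_{min}\bigr)$ for all $p\in(0,1)$; that expression is only its limit as $p\to 0^+$. The relations \eqref{eq:exp_theta_min}--\eqref{eq:cotg_theta_min} constrain $\theta_{min}$ through $\lambda,\omega$ (functions of $c$ alone), whereas $A_1,A_2$ involve $\kappa_1,\kappa_2$ (functions of $p$); there is no algebraic mechanism for the $p$-dependence to vanish at $\theta=\theta_{min}$, and indeed the paper records that the relevant piece is strictly decreasing in $p$, not constant.

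Your fallback --- show $\partial_p L(c,p,\theta_{min})\le 0$ and pass to $p\to 0^+$ --- is the paper's actual route, with one economising twist you do not anticipate. The paper uses exactly your $\arctan$/$\arccot$ split, writing $L(c,p,\theta_{min})=L_1(c,p)+L_2(c,p)$ with $L_1=\tfrac12+\tfrac{1}{\pi}\arctan(\cdot)$ and $L_2=\dfrac{\kappa_2}{\pi\kappa_1}\arccot(\cdot)$, but then bounds $L_1\in(0,1)$ \emph{trivially} from the range of $\arctan$ and proves decreasingness in $p$ only for $L_2$, the piece carrying the dangerous factor $\kappa_2/\kappa_1$. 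The limit $\lim_{p\to 0}L_2=-\dfrac{c}{\pi}\bigl(\sqrt{2-c^2}+\theta_{min}\bigr)$ then yields \eqref{eq:sup_L}. This avoids any monotonicity analysis of $L_1$, which your formulation would require and which the paper never checks; so your alternative, as stated for the full $L$, may demand more work than the argument actually needs.
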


\begin{proof}
First, from $L'_{\theta}<0$ and $L(\theta_1-)>L(\theta_1+)$ it follows that 
$\sup_{\theta}L(c,p,\theta)=\lim_{\theta \rightarrow \theta_{min}}L(c,p,\theta).$
Next, we rewrite $L(c,p,\theta_{min}^{+})$ as $L_1(c,p)+L_2(c,p)$ where
\begin{eqnarray}
\nonumber
L_1(c,p)=\dfrac{1}{2}+\dfrac{1}{\pi}\arctan\left(\dfrac{\sqrt{2}\sqrt{1+\sqrt{1-p}}(-2+2\sqrt{1-p}+pc^2(1+\theta_{min}^{+}\sqrt{2-c^2}))}{pc(2\sqrt{2-c^2}+\theta_{min}^{+}(2-c^2-c^2\sqrt{1-p}))}\right),\\
\nonumber
L_2(c,p)= \dfrac{\kappa_2}{\kappa_1}\left(\dfrac{1}{2}-\dfrac{1}{\pi}\arctan\left(\dfrac{-\sqrt{2}\sqrt{1-\sqrt{1-p}}(2+2\sqrt{1-p}-pc^2(1+\theta_{min}^{+}\sqrt{2-c^2}))}{pc(2\sqrt{2-c^2}+\theta_{min}^{+}(2-c^2+c^2\sqrt{1-p}))}\right)\right).
\end{eqnarray}
From the properties of arctangent function we get simple estimation $R(L_1) \subset (0,{1})$. It can be shown that function $L_2$ is continuous and decreasing in $p$ on $\Omega_{p,\theta}$ for any fixed $c \in (0,\sqrt{2})$. Thus 
$$\sup\limits_{p\in (0,1)}L_2(c,p,\theta_{min}^{+})=\lim_{p\rightarrow 0}L_2(c,p,\theta_{min}^{+})=-\frac{c}{\pi}(\sqrt{2-c^2}+\theta_{min}^{+}).$$
Together, we get \eqref{eq:sup_L}.
\end{proof}

Next, we introduce our main result.

\begin{theorem}
\label{th:max_number}
Let $c^2 \in (0,2)$ and $b/a \in (0,c^4/4)$. Then the problem \eqref{eq:original_problem} has at most 6 even one-troughed travelling wave solutions.
\end{theorem}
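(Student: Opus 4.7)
The plan is to upgrade the supremum bound $L(c,p,\theta) \leq L_s(c)$ of Lemma~\ref{lemma:sup_L} into a uniform cap on the number of integers attained by $L(c,p,\cdot)$, by combining it with the monotonicity $L'_{\theta}<0$ and the single-point jump structure of $L$. The first and hardest ingredient will be to show $L_s(c) \leq 5$ for every $c \in (0,\sqrt{2})$. Granting this, Lemma~\ref{lemma:sup_L} forces $L < 5$ throughout the admissible region $\Omega_{p,\theta}$.

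Once the uniform cap is available, the counting follows quickly. For fixed $(c,p)$, the one-troughed solutions are in bijection with the integers $k \in \N$ attained by $L(c,p,\cdot)$ on $(\theta_{\min},\theta_2)$, and strict monotonicity in $\theta$ means each attained integer contributes exactly one solution per interval of continuity. For $p > p_0$, the proof of Lemma~\ref{lemma:p>p0} shows that $L$ has a single discontinuity at $\theta_1$ with jump $L(\theta_1^-) - L(\theta_1^+) = 1$, and that the branch $L_1 := L|_{(\theta_{\min},\theta_1)}$ satisfies $R(L_1) \subset (1, L_s(c)) \subset (1,5)$, so $L_1$ attains at most the three integer values $\{2,3,4\}$. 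The jump identity then yields $\sup L_2 = L_1(\theta_1^-) - 1 < L_s(c) - 1 \leq 4$, so $R(L_2) \subset (0,4)$ contributes at most three further integers $\{1,2,3\}$; altogether at most six solutions in this regime. For $p < p_0$, the function $L$ is continuous on the whole admissible interval and $R(L) \subset (0, L_s(c)) \subset (0,5)$, contributing at most four solutions corresponding to $\{1,2,3,4\}$. Taking the maximum over both cases gives the theorem.

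The main obstacle is therefore proving $L_s(c) \leq 5$ uniformly in $c \in (0,\sqrt{2})$. Because $\theta_{\min}(c)$ is defined only implicitly through the transcendental system (\ref{eq:exp_theta_min})--(\ref{eq:cotg_theta_min}) together with the formula for $t^{\ast}$, no closed form for $L_s(c) = 1 - (c/\pi)(\sqrt{2-c^2}+\theta_{\min}(c))$ is available. I would proceed by implicit differentiation of this system to compute $\tfrac{\dd L_s}{\dd c}$, locate the critical point(s) of $L_s$ in $(0,\sqrt{2})$, and verify that the global maximum of $L_s$ on this compact interval lies strictly below~$5$. Monotonicity arguments on suitable subintervals combined with careful (possibly interval-arithmetic) evaluation at the critical point(s) should close the argument rigorously, and are in line with the numerical picture in Figure~\ref{fig:2}, where at most six solutions are observed.
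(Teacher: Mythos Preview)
Your key quantitative claim, $L_s(c)\le 5$ for all $c\in(0,\sqrt{2})$, is false. The paper's own worked example (Figure~\ref{fig:3}, left, and Figure~\ref{fig:figures_sol}) with $c=0.61005$, $p=0.00065$ produces five one-troughed waves corresponding to $L=2,3,4,5,6$; in particular $L$ attains the value~$6$, so $\sup_\theta L(c,p,\theta)\ge 6$ and hence, by Lemma~\ref{lemma:sup_L}, $L_s(0.61005)\ge 6$. The paper only asserts (and only needs) $\sup_c L_s(c)<7$. Feeding $L_s<7$ into your branch-by-branch count gives $R(L_1)\subset(1,7)$ (five integers) and $R(L_2)\subset(0,6)$ (five integers), hence at most $10$ solutions --- so your double-counting scheme cannot reach the bound~$6$ with any \emph{true} estimate on $L_s$. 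No amount of implicit differentiation of the system \eqref{eq:exp_theta_min}--\eqref{eq:cotg_theta_min} will rescue this, since the inequality you aim for simply does not hold.

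The mechanism that actually delivers the bound is different and avoids this obstruction: because $L$ is strictly decreasing on each continuity interval and the single jump satisfies $L(\theta_1^-)-L(\theta_1^+)=1$ (proof of Lemma~\ref{lemma:p>p0}), the ranges $R(L_1)=(L(\theta_1^-),\,L(\theta_{\min}^+))$ and $R(L_2)=(L(\theta_2^-),\,L(\theta_1^-)-1)$ are separated by an interval of length~$1$ and therefore contain no common integer. Thus each $k\in\N$ is hit at most once, and the weaker bound $0<L<7$ already forces $k\in\{1,\dots,6\}$, giving at most six solutions. Replace your attempt to sharpen $L_s$ by this disjoint-range observation; then the only analytic input required is the paper's inequality $\sup_c L_s(c)<7$.
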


\begin{proof}
As was described above, one-troughed travelling wave solutions of \eqref{eq:original_problem} correspond to solutions of $L(c,p,\theta)=k$ with $k \in \N$ and $p = 4\xi/c^4 = 4b/(ac^4) \in (0,1)$. From Lemma \ref{lemma:sup_L} (see \eqref{eq:sup_L}) we have $\sup_{\Omega_{p,\theta}}L(c,p,\theta)\leq L_{s}(c)$. For $c \in (0,\sqrt{2})$ one can easily verify that
\begin{equation*}
\sup\limits_{c \in (0,\sqrt{2})} L_s(c)<7.
\end{equation*}
Thus, the system of equations $L(c,p,\theta)=k$, $k \in \N$, and therefore also problem \eqref{eq:original_problem} have at most 6 solutions.
\end{proof}

\begin{figure}[tb]
\centerline{
  \setlength{\unitlength}{1mm}
  \begin{picture}(75, 45)(0,0)
    \put(0,0){\includegraphics[height=4.5cm]{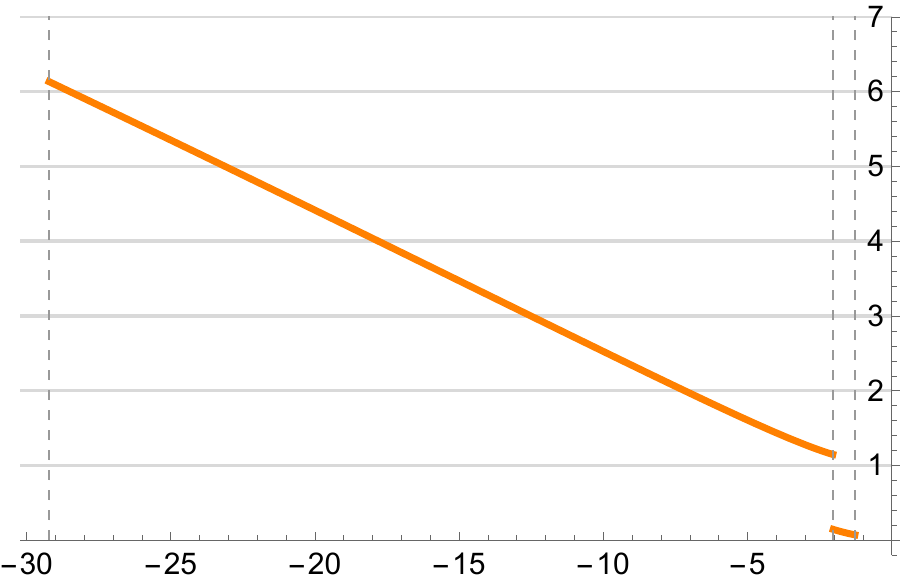}}       
	\put(74,0){\makebox(0,0)[lb]{\footnotesize$\theta$}}       
    \put(74,45){\makebox(0,0)[lb]{\footnotesize$L$}}  
    \put(3,-2){\makebox(0,0)[lb]{\footnotesize$\theta_{min}$}} 
    \put(66,-2){\makebox(0,0)[lb]{\footnotesize$\theta_1$}}  
    \put(69,-2){\makebox(0,0)[lb]{\footnotesize$\theta_2$}}         
  \end{picture}
  \hfill
       \begin{picture}(75, 45)(0,0)
    \put(0,0){\includegraphics[height=4.5cm]{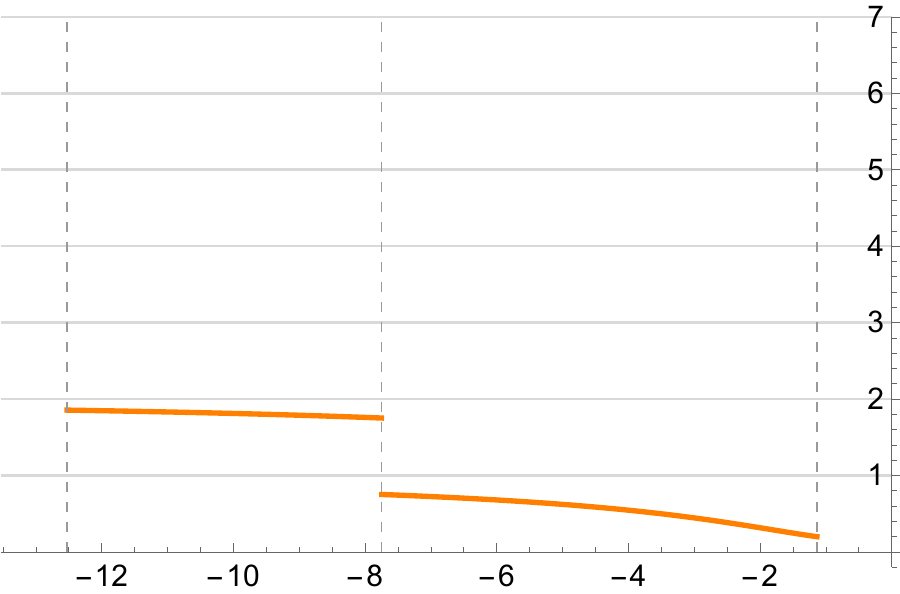}}       
	\put(74,0){\makebox(0,0)[lb]{\footnotesize$\theta$}}       
    \put(74,45){\makebox(0,0)[lb]{\footnotesize$L$}}  
    \put(3,-2){\makebox(0,0)[lb]{\footnotesize$\theta_{min}$}} 
    \put(30,-2){\makebox(0,0)[lb]{\footnotesize$\theta_1$}}  
    \put(65,-2){\makebox(0,0)[lb]{\footnotesize$\theta_2$}}         
  \end{picture}
  }
\caption{Graph of function $L$ with $c=0.61005$ and $p=0.00065$ on the left. Graph of function $L$ with $c=0.99$ and $p=0.4$ on the right.}
\label{fig:3}
\end{figure}

\section{The visualisation of the number of one-troughed waves}
\label{sec:experiments}

The visualisation of the number of one-troughed travelling waves suggests that there are between zero and five different one-troughed solutions, see Figure \ref{fig:2}. In this section, we, at least briefly, describe the algorithm for finding the exact number of one-troughed travelling wave solutions for specific fixed pair $(c,p)$ and, at the same time, create the visualisation. The procedure is as follows.

First, we cover the rectangle $\left(0,\sqrt{2}\right)\times\left(0,{1}\right)$ in $cp$ plane with a grid of points $(c,p)$ with a certain density. At each selected point we calculate the corresponding value of $\theta_{min}$ and determine whether the discontinuity $\theta_1$  lies in $(\theta_{min},\theta_2)$.

On each subinterval of $(\theta_{min},\theta_2)$ where the function $L=L(\theta)$ is continuous we determine its floor value, i.e., the lower integer part, at the end points of the relevant interval. Their difference corresponds to the number of intersections of $L$ with constant functions $f_k(\theta)=k$, $k \in \N$. In Figure \ref{fig:3}, graphs of $f_k$ are depicted in gray. We add up the number of intersections on all intervals of continuity of $L$ to obtain the number of solutions, which we then plot in different colours, see Figure \ref{fig:2}.

For the sake of completeness, we present also the graph of $L$ with specific chosen values of $c$ and $p$, in particular $c=0.61005$ and $p=0.00065$, see Figure \ref{fig:3}, left. One can see that there are intersections of $L$ with $f_k(\theta)=k$, $k=2,3,4,5,6$. Thus for this setting of parameters, there are five different symmetric one-troughed travelling wave solutions. Graphs of relevant solutions can be found in Figure~\ref{fig:figures_sol}. The interesting property of these solutions is that the larger $k$ is, the more local extrema the function $z$ has in the region under $-1$. In \cite{champneys_mckenna} this phenomenon is referred as multiwiggle type of solution.

For comparison, we also present the graph of $L$ with $c=0.99$ and $p=0.4$, see Figure \ref{fig:3}, right. In this case, there are no intersections with $f_k$ for any $k \in \N$. Thus, the problem \eqref{eq:original_problem} does not have any one-troughed travelling wave solutions with $c=0.99$ and $p=0.4$.

\section{Open questions}
\label{sec:open_questions}
Theorem \ref{th:max_number} states that  \eqref{eq:original_problem} with fixed input parameters has at most six different even one-troughed travelling wave solutions. However, the visualization suggests that, as same as in \cite{champneys_mckenna}, the maximum number of the required solutions is five (see Figure \ref{fig:2}). The proof of the nonexistence of the sixth solution is the open problem that could be solvable by a finer analysis of the range of $L$. Another related open question is the analytic description of the number of one-troughed solutions for any pair $(c,{p})$.

Moreover, we limited ourselves to the special case of symmetric (even) one-troughed solutions. The question naturally arises whether there are any asymmetric one-troughed or multi-troughed solutions. In \cite{champneys_mckenna} there is shown that there are infinitely many multi-troughed waves for some subinterval of $(0,{\sqrt{2}})$ for \eqref{eq:original_problem} with $b = 0$. There is also a conjecture stated about nonexistence of asymmetric one-troughed waves. However, it remains unproved.

\newpage
\begin{figure}[h]
\centering
\begin{subfigure}{0.4\textwidth}
	\setlength{\unitlength}{1mm}
	  \begin{picture}(65, 45)(0,0)
   		 	\put(0,0){\includegraphics[width=\textwidth]{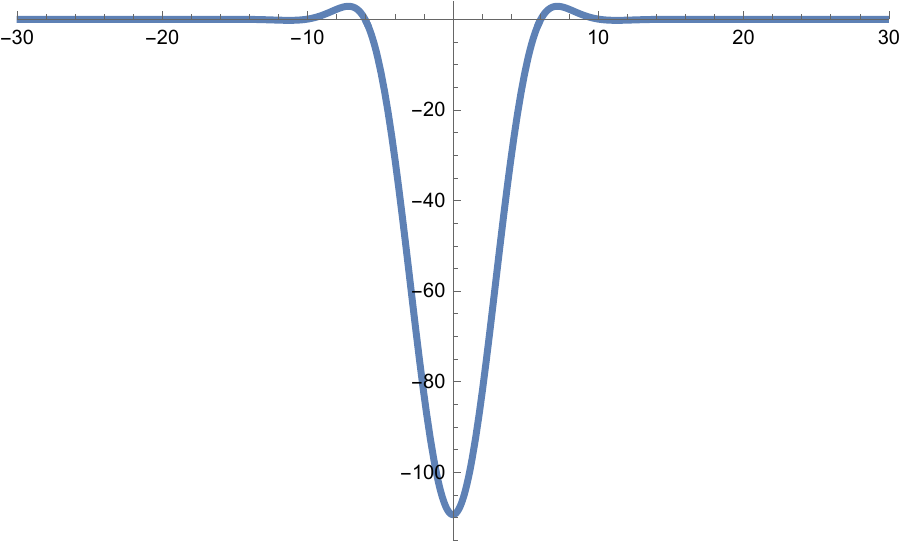}}
    		\put(31,41){\makebox(0,0)[lb]{\footnotesize$z$}} 
    		\put(65,41){\makebox(0,0)[lb]{\footnotesize$t$}}   
    	\end{picture} 
    \caption{First solution with $L=2$.}
    \label{fig:first}
\end{subfigure}
\hfill
\begin{subfigure}{0.4\textwidth}
		\setlength{\unitlength}{1mm}
	  	\begin{picture}(65, 45)(0,0)
    		\put(0,0){\includegraphics[width=\textwidth]{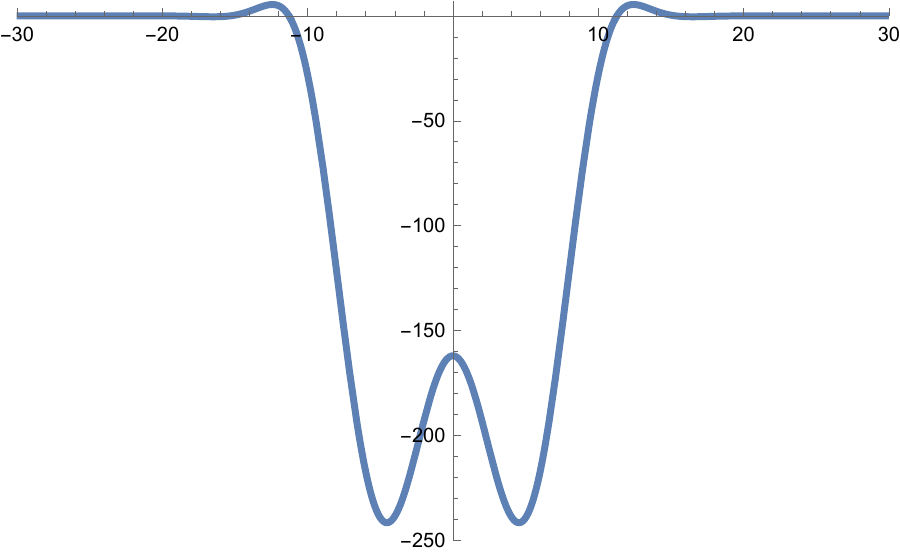}}
    		\put(31,41){\makebox(0,0)[lb]{\footnotesize$z$}} 
    		\put(65,41){\makebox(0,0)[lb]{\footnotesize$t$}} 
    		\end{picture}  
    \caption{Second solution with $L=3$.}
    \label{fig:second}
\end{subfigure}
\hfill
\vspace{0.5cm}
\begin{subfigure}{0.4\textwidth}
\setlength{\unitlength}{1mm}
	  	\begin{picture}(65, 45)(0,0)
    	\put(0,0){\includegraphics[width=\textwidth]{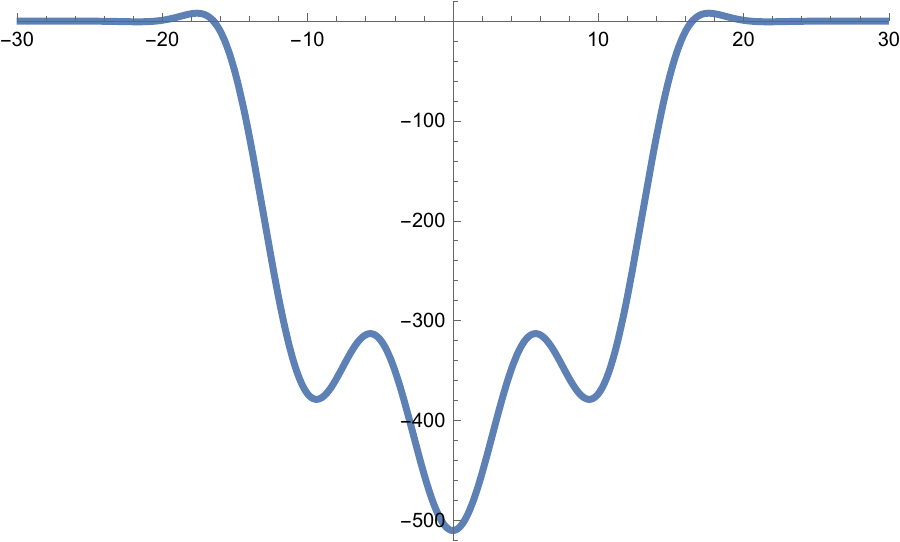}}
    	\put(31,41){\makebox(0,0)[lb]{\footnotesize$z$}} 
    		\put(65,41){\makebox(0,0)[lb]{\footnotesize$t$}} 
    		\end{picture} 
    \caption{Third solution with $L=4$.}
    \label{fig:third}
\end{subfigure}
\hfill
\begin{subfigure}{0.4\textwidth}
\setlength{\unitlength}{1mm}
	  	\begin{picture}(65, 45)(0,0)
    \put(0,0){\includegraphics[width=\textwidth]{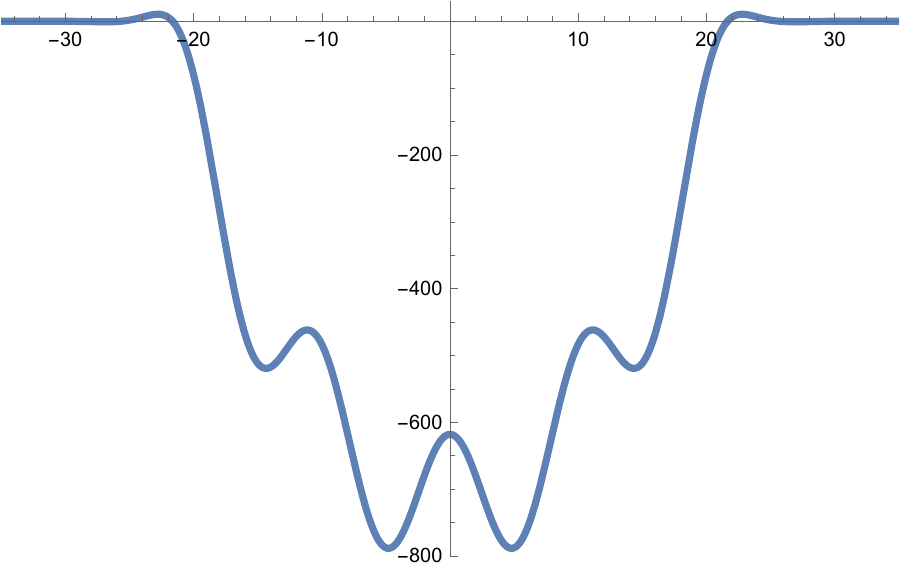}}
    \put(31,42){\makebox(0,0)[lb]{\footnotesize$z$}} 
    		\put(65,42){\makebox(0,0)[lb]{\footnotesize$t$}} 
    		\end{picture} 
    \caption{Fourth solution with $L=5$.}
    \label{fig:fourth}
\end{subfigure}
\hfill
\vspace{0.5cm}
\begin{subfigure}{0.4\textwidth}
\setlength{\unitlength}{1mm}
	  	\begin{picture}(65, 45)(0,0)
   	\put(0,0){\includegraphics[width=\textwidth]{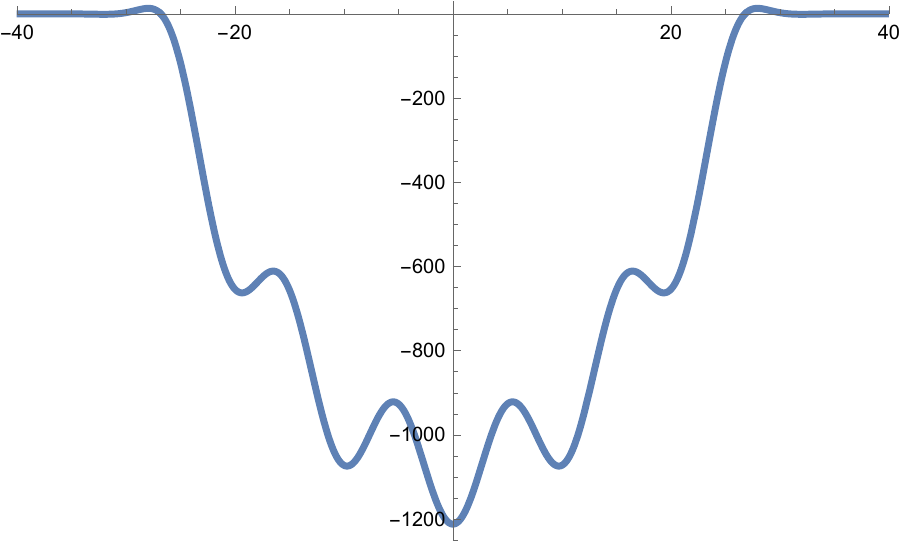}}
   	\put(31,41){\makebox(0,0)[lb]{\footnotesize$z$}} 
    		\put(65,41){\makebox(0,0)[lb]{\footnotesize$t$}} 
    		\end{picture}
    \caption{Fifth solution with $L=6$.}
    \label{fig:fifth}
\end{subfigure}        
\caption{Five different one-troughed travelling wave solutions with $c=0.61005$ and $p=0.00065$, i.e., $\xi =0.0000225069$.}
\label{fig:figures_sol}
\end{figure}

We should also mention, that there are pairs $(c,p)$ with  $p<\beta^{\ast}$, $\beta^{\ast}$ given in \cite{holubova_leva_necesal} and illustrated in Figures~\ref{fig:beta_star} and \ref{fig:2}, such that there is no one-troughed travelling wave solution. However, the existence result in \cite{holubova_leva_necesal} states that there definitely exists some solution, which means that it has to be the multi-troughed one. The question is, how many of these solutions exist for fixed $(c,{p})$.

Last but not least, the gap between the necessary and sufficient condition in \cite{holubova_leva_necesal} implies another open question: Is there any travelling wave solution in the region $p > \beta^{\ast}$? And what about their eventual (non)uniqueness?

\bigskip

\begin{acknowledgement}
	Both authors were supported by the Grant Agency of the Czech Republic, Grant No. 22--18261S.
\end{acknowledgement}
	\bibliographystyle{abbrv}
\bibliography{travelling_waves_references}

\newcommand{\noop}[1]{}
\begin{thebibliography}{10}

\bibitem{amann_karman_woodruff}
O.~H. Amann, T.~K\'{a}rm\'{a}n, and G.~B. Woodruff.
\newblock The failure of the tacoma narrows bridge.
\newblock {\em Federal Works Agency.}, 1941.

\bibitem{breuer}
B.~Breuer, J.~Hor\'{a}k, P.~J. McKenna, and M.~Plum.
\newblock A computer assisted existence and multiplicity proof for travelling
  waves in a nonlinearly supported beam.
\newblock {\em Journal of Differential Equations}, 224:60--97, 2006.

\bibitem{camacho_bruzon_ramirez_gandarias}
J.~C. Camacho, M.~S. Bruz\'{o}n, J.~Ram\'{i}rez, and M.~L. Gandarias.
\newblock Exact travelling wave solutions of a beam equation.
\newblock {\em Journal of Nonlinear Mathematical Physics}, 18(1):33--49, 2011.

\bibitem{champneys_mckenna}
A.~R. Champneys and P.~J. McKenna.
\newblock On solitary waves of a piecewise linear suspended beam model.
\newblock {\em Nonlinearity}, 10(6):1763, 1997.

\bibitem{champneys_mckenna_zegeling}
A.~R. Champneys, P.~J. McKenna, and P.~A. Zegeling.
\newblock Solitary waves in nonlinear beam equations: stability, fission and
  fusion.
\newblock {\em Nonlinear Dynamics}, 21(13):31--53, 2000.

\bibitem{chen}
Y.~Chen.
\newblock Traveling wave solutions to beam equation with fast-increasing
  nonlinear restoring forces.
\newblock {\em Applied Mathematics -- A Journal of Chinese Universities},
  15(2):156--160, 2000.

\bibitem{chen_mckenna}
Y.~Chen and P.~J. McKenna.
\newblock Traveling waves in a nonlinearly suspended beam: Theoretical results
  and numerical observations.
\newblock {\em Journal of differential equations}, 136:325--355, 1997.

\bibitem{chen_mckenna_2}
Y.~Chen and P.~J. McKenna.
\newblock Travelling waves in a nonlinearly suspended beam: Some computational
  results and four open questions.
\newblock {\em Philosophical Transactions of the Royal Society of London.
  Series A: Mathematical, Physical and Engineering Sciences}, 355:2175--2184,
  1997.

\bibitem{diaferio_sepe}
M.~Diaferio and V.~Sepe.
\newblock Smoothed “slack cable” models for large amplitude oscillations of
  suspension bridges.
\newblock {\em Mechanics Based Design of Structures and Machines},
  32(3):363--400, 2004.

\bibitem{holubova_leva_necesal}
H.~Form\'{a}nkov\'{a}~Lev\'{a}, G.~Holubov\'{a}, and P.~Ne\v{c}esal.
\newblock Lower bounds for admissible values of the travelling wave speed in
  asymmetrically supported beam.
\newblock \noop{3001} Preprint, arXiv.

\bibitem{holubova_leva}
G.~Holubov\'{a} and H.~Lev\'{a}.
\newblock Travelling wave solutions of the beam equation with jumping
  nonlinearity.
\newblock {\em Journal of Mathematical Analysis and Applications},
  527(2):127466, 2023.

\bibitem{karageorgis_stalker}
P.~Karageorgis and J.~Stalker.
\newblock A lower bound for the amplitude of traveling waves of suspension
  bridges.
\newblock {\em Nonlinear Analysis: Theory, Methods \& Applications},
  75(13):5212--5214, 2012.

\bibitem{lazer_mckenna_2}
A.~C. Lazer and P.~J. McKenna.
\newblock Large-amplitude periodic oscillations in suspension bridges: Some new
  connections with nonlinear analysis.
\newblock {\em SIAM Review}, 32:537--578, 1990.

\bibitem{lazer_mckenna}
A.~C. Lazer and P.~J. McKenna.
\newblock On travelling waves in a suspension bridge model as the wave speed
  goes to zero.
\newblock {\em Nonlinear Analysis: Theory, Methods \& Applications},
  74(12):3998--4001, 2011.

\bibitem{levandosky}
S.~Levandosky.
\newblock Stability and instability of fourth-order solitary waves.
\newblock {\em Journal of Dynamics and Differential Equations}, 10:151--188,
  1998.

\bibitem{li_sun_wu}
T.~Li, J.~Sun, and T.~Wu.
\newblock Existence of homoclinic solutions for a fourth order differential
  equation with a parameter.
\newblock {\em Applied Mathematics and Computation}, 251:499--506, 2015.

\bibitem{mckenna_walter}
P.~J. McKenna and W.~Walter.
\newblock Travelling waves in a suspension bridge.
\newblock {\em SIAM Journal of Applied Mathematics}, 50(3):703--715, 1990.

\bibitem{smets_vandenberg}
D.~Smets and J.~B. van~den Berg.
\newblock Homoclinic solutions for swift--hohenberg and suspension brigde type
  equations.
\newblock {\em Journal of Differential Equations}, 184(1):78--96, 2002.

\end{thebibliography}
\end{document}